\newcommand\iid{i.i.d.}
\newcommand\wrt{with respect to}
\newcommand\ie{i.e.}
\newcommand\eg{e.g.}
\newcommand\constant{\mathrm{cst}}
\newcommand\Nset{\mathbb{N}}
\newcommand\Zset{\mathbb{Z}}
\newcommand\Rset{\mathbb{R}}
\newcommand\esp{\mathbb E}
\newcommand\pr{\mathbb P}
\newcommand\var{\mathrm{var}}
\newcommand\cov{\mathrm{cov}}
\newcommand{\tail}[1]{\overline{#1}}
\newcommand\ind[1]{\mathbbm{1}{\left\{#1\right\}}}
\newcommand{\bszero}{{\boldsymbol0}}
\newcommand{\vectorbold}[1]{\boldsymbol{#1}}
\newcommand\vbx{\vectorbold{x}}
\newcommand\vbW{\vectorbold{W}}
\newcommand\vbX{\vectorbold{X}}
\newcommand\vby{\vectorbold{y}}
\newcommand\rme{\mathrm{e}}
\newcommand\rmd{\mathrm{d}} 
\newcommand{\scalfunccev}{b}
\newcommand{\scalingexp}{\kappa}
\newcommandx{\tepseq}[1][1=n]{u_{#1}}  
\newcommand{\cevcdfuniv}{\Psi}
\newcommand\tepextrindep{\mathbb{M}} 
\newcommand\TEPextrindep{\mathbb{I}} 
\newcommand\teplimextrindep{{\bbM}}
\newcommand\tedextrindep{\widetilde{M}}
\newcommand\TEDextrindep{\widetilde{I}}
\newcommand\funcextrindep{\mcm}
\newcommand\tepabs{\mathsf{T}}
\newcommand{\norm}[1]{\left|#1\right|} 
\newcommand\numultcond[1]{\boldsymbol{\mu}_{\vectorbold{#1}}}
\newcommandx{\numult}[2][2=]{{\boldsymbol{\nu}}_{\vectorbold{#1}_{#2}}} 
\newcommand\nucond{\boldsymbol{\mu}} 
\newcommand\nucondspectral{\boldsymbol{\sigma}} 
\newcommandx{\dhinterseq}[1][1=n]{r_{#1}}  
\newcommandx{\dhinterseqsmall}[1][1=n]{l_{#1}}  
\newcommand\mbg{\mathbb{G}}
\newcommand\mce{\mathcal E}
\newcommand\mcf{\mathcal F}
\newcommand\mcg{\mathcal G}
\newcommand\mci{\mathcal I}
\newcommand\mcm{\mathcal M}
\newcommand\bbM{\mathbb M}
\newcommand\mcv{\mathcal V}
\newcommand\mcy{\mathcal Y}
\newcommand\mby{\mathbb Y}
\newcommandx{\sequence}[3][2=\Zset,3=j]{\{#1_{#3},#3\in#2\}}
\newcommand\convprob{\stackrel{\mbox{\small\tiny p}}{\longrightarrow}}
\newcommand\convfidi{\stackrel{\mbox{\small\tiny fi.di.}}{\to}} 
\newcommand\convvague{\stackrel{\mbox{\tiny\rm v}}{\longrightarrow}} 
\newcommand\convdistr{\stackrel{\mbox{\tiny\rm d}}{\longrightarrow}} 
\newcommand\convweak{\stackrel{\mbox{\tiny\rm w}}{\longrightarrow}} 
\newcommand\law[1]{\mathcal{L}\left(#1\right)} 
\newcommand{\hatcevcdf}{\widehat{\Psi}}  
\newcommandx\orderstat[3][1=X]{#1_{(#2:#3)}}
\newcommand\statinterseq{k}
\newcommand\bmotion{\mathbb{W}} 
\newcommand\bbridge{\mathbb{B}} 
\newcommand\gauss{\mathbf{N}} 
\newcommand\indep{\dag}  
\newtheorem{theorem}{Theorem}[section]
\newtheorem{lemma}[theorem]{Lemma}
\newtheorem{corollary}[theorem]{Corollary}
\newtheorem{definition}[theorem]{Definition}
\newtheorem{hypothesis}[theorem]{Assumption}
\theoremstyle{remark}
\newtheorem{remark}[theorem]{Remark}
\newtheorem{example}[theorem]{Example}
\crefname{hypothesis}{Assumption}{Assumptions}
\numberwithin{equation}{section}
\begin{document}
\title{Statistical inference for heavy tailed series with extremal independence}

\author{Clemonell Bilayi-Biakana \and Rafa{\l} Kulik\thanks{University of Ottawa} \and Philippe
  Soulier\thanks{Universit\'e  Paris Nanterre}}
\date{}
\maketitle

\begin{abstract}
  We consider stationary time series $\{X_j,j\in\Zset\}$ whose finite dimensional distributions are
regularly varying with extremal independence. We assume that for each $h\geq1$, conditionally on
$X_0$ to exceed a threshold tending to infinity, the conditional distribution of $X_h$ suitably
normalized converges weakly to a non degenerate distribution. We consider in this paper the
estimation of the normalization and of the limiting distribution.
\end{abstract}

\section{Introduction}
\label{sec:introduction}
Let $\sequence{X}$ be a strictly stationary univariate time series.  We say that the time series
$\{X_j\}$ is regularly varying if all its finite dimensional distributions are regularly varying,
i.e.~for each $h\geq0$, there exists a nonzero boundedly finite measure $\numult{h}$ on
$\Rset^{h+1}\setminus\{\vectorbold{0}\}$ infinity, such that
\begin{align}
  \label{eq:mrv-x}
  \frac{\pr\left( x^{-1}(X_0,\dots,X_h) \in \cdot \right)} {\pr(|X_0|>x)} \convvague \numult{h} \; ,
\end{align}
on $\Rset^{h+1}\setminus\{\vectorbold{0}\}$, as $x\to\infty$, where $\convvague$ means vague
convergence. Following \cite{kallenberg:2017}, we say that a measure $\nu$ defined on a complete
separable metric space~$E$ (endowed with its Borel $\sigma$-field) is boundedly finite if $\nu(B)$
for all Borel bounded sets and a sequence of boundedly finite measures $\{\nu_n\}$ is said to
converge vaguely to a measure~$\nu$ if $\nu_n(f)\to\nu(f)$ for all continuous functions with bounded
support. See also \cite{hult:lindskog:2006} who use the terminology of $\mcm_0$-convergence.  Here the
metric space considered is $\Rset^{h+1}\setminus\{\bszero\}$ endowed with the metric
\begin{align*}
  d_h(\vbx,\vby) = \norm{\vbx-\vby} \vee |\norm{\vbx}^{-1} - \norm{\vby}^{-1} | \; ,
\end{align*}
where $\norm{\cdot}$ is an arbitrary norm on $\Rset^{h+1}$. This metric induces the usual topology
and makes $\Rset^{h+1}\setminus\{\bszero\}$ a complete separable space and bounded sets are sets
separated from zero. Moreover, $\Rset^{h+1}\setminus\{\bszero\}$ is still locally compact so this
definition essentially yields the same notion as the classical vague convergence without the need for
compactification at  infinity.

This assumption implies that there exists $\alpha>0$ such that the measure~$\numult{h}$ is
homogeneous of degree $-\alpha$ and the marginal distribution of $X_0$ is regularly varying and
satisfies the balanced tail condition: there exists $p\in[0,1]$ such that
\begin{align*}
  p = \lim_{x\to\infty} \frac{\pr(X_0>x)}{\pr(|X_0|>x)} = 1 - \lim_{x\to\infty} \frac{\pr(X_0<-x)}{\pr(|X_0|>x)} \; .
\end{align*}
Without loss of generality, we assume that $p>0$.

If $h\geq 1$, there exist two fundamentally different cases: either the exponent measure is
concentrated on the axes or it is not. The former case is referred to as extremal independence and
the latter as extremal dependence. In other words, extremal independence means that no two
components can be extremely large at the same time, and extremal dependence means that some pairs
components can be simultaneously extremely large.

In a time series context, we may want to assess the influence of an extreme event at time zero on
future observations. If the finite dimensional distributions of the time series model under
consideration are extremally independent or more generally if the vector $(X_0,X_m,\dots,X_h)$ is
extremally independent for some $h\geq m\geq1$, then, for any Borel set $A$ which is bounded away
from zero in $\Rset^{h-m+1}$ and $y_0>0$,
\begin{align}
  \label{eq:extremal-indep-joint-exceed}
  \lim_{x\to\infty} \frac{\pr(X_0>xy_0, (X_m\dots,X_h) \in xA)}{\pr(|X_0|>x)} = 0  \; .
\end{align}
Thus in case of extremal independence the exponent measure $\numult{h}$ provides
no information on (most) extreme events occurring after an extreme event at
time~0.

In order to obtain a non degenerate limit in~(\ref{eq:extremal-indep-joint-exceed}) and a finer
analysis of the sequence of extreme values, it is necessary to change the normalization
in~(\ref{eq:mrv-x}), and possibly the space on which we will assume that vague convergence
holds. One idea is to find a sequence of normalizations $\scalfunccev_j(x)$, $j\geq1$ such that for
each $h\geq1$, the conditional distribution of
$(X_0/x,X_1/\scalfunccev_1(x),\dots,X_h(x)/\scalfunccev_h(x))$ given $|X_0|>x$ has a non degenerate
limit. Pursuing in the direction opened by \cite{heffernan:resnick:2007} and
\cite{das:resnick:2011}, \cite{lindskog:resnick:roy:2014} and \cite{kulik:soulier:2015} we will
consider vague convergence on the set $(\Rset\setminus\{0\})\times\Rset^h$ endowed with the metric
$d_h^0$ on 
defined by
\begin{align*}
  d_h^0(\vbx,\vby) =  \norm{\vbx-\vby} \vee |x_0^{-1}-y_0^{-1}| \; .
\end{align*}
The bounded sets for this metric are those sets $A$ such that $\vbx=(x_0,\dots,x_h) \in A$ implies
$|x_0|>\epsilon$ for some $\epsilon>0$. Note that under the present definition of vague convergence,
we avoid the pitfalls described in \cite{drees:janssen:2017}.
\begin{hypothesis}
  \label{hypo:conditional-indep-measure}
  There exist scaling functions $\scalfunccev_j$, $j\geq1$ and nonzero measures $\nucond_{0,h}$,
  $h\geq1$, boundedly finite on $(\Rset\setminus\{0\})\times \Rset^h$, $h\geq1$, such that
  \begin{align}
    \label{eq:indep-vague-noncentered}
    \frac1{\pr(|X_0|>x)} \pr \left( \left(\frac{X_0}{x},
        \frac{X_1}{\scalfunccev_{1}(x)}, \cdots, \frac{X_h}{\scalfunccev_{h}(x)}
      \right) \in \cdot \right) \convvague \nucond_{0,h} \; ,
  \end{align}
  on $(\Rset\setminus\{0\})\times\Rset^h$ and for every $y_0>0$, the measures
  $\nucond_{0,h}([y_0,\infty) \times \cdot)$ and $\nucond_{0,h}((-\infty,-y_0) \times \cdot)$ on
  $\Rset^h$ are not concentrated on a hyperplane.
\end{hypothesis}
This assumption does not exclude regularly varying time series with extremal dependence for which
$b_j(x)=x$ for all $j\geq 0$. But our interest will be in extremally independent time series for
which $b_j(x)=o(x)$ for all $j\geq0$. This assumption is fulfilled by many time series, like
stochastic volatility models with heavy tailed noise or heavy tailed volatility, exponential moving
averages and certain Markov chains with regularly varying initial distribution and appropriate
conditions on the transition kernel. See \cite{kulik:soulier:2015}, \cite{mikosch:rezapour:2013}
and~\cite{janssen:drees:2016}.

An important consequence of Assumption~\ref{hypo:conditional-indep-measure} is that the functions
$\scalfunccev_j$, $j\geq 1$ are regularly varying (see \cite[Proposition~1]{heffernan:resnick:2007}
and \cite{kulik:soulier:2015}.)  To put emphasis on the regular variation of the functions
$\scalfunccev_j$, we recall the following definition of \cite{kulik:soulier:2015}.
\begin{definition}[Conditional scaling exponent]
  \label{defi:CSE}
  Under Assumption~\ref{hypo:conditional-indep-measure}, for $h\geq1$, we call the index
  $\scalingexp_h$ of regular variation of the functions $\scalfunccev_h$ the (lag $h$) conditional
  scaling exponent.
\end{definition}
The exponents $\scalingexp_h$, $h\geq1$ reflect the influence of an extreme event at time zero on
future lags. Even though we expect this influence to decrease with the lag in the case of extremal
independence, these exponents are not necessarily monotone decreasing. The measures $\nucond_{0,h}$ also
have some important homogeneity properties: For all Borel sets $A_0\subset\Rset\setminus\{0\}$,
$A_1,\dots,A_h\subset\Rset$,
\begin{align}
  \label{eq:homogeneity-cev}
  \nucond_{0,h} \left( tA_0\times \prod_{i=1}^h t^{\scalingexp_i}A_i \right)  =
  t^{-\alpha} \nucond_{0,h} \left(  \prod_{i=0}^h A_i \right) \; .
\end{align}
Equivalently, for all bounded measurable functions $f$,
\begin{align}
  \label{eq:homogeneity-cev}
  \int_{(\Rset\setminus\{0\})\times\Rset^h} f(t^{-1}x_0, t^{-\scalingexp_1}x_1,\dots,t^{-\scalingexp_h} x_h) \nucond_{0,h}(\rmd \vbx) =
  t^{-\alpha}   \int_{(\Rset\setminus\{0\})\times\Rset^h}  f(\vbx) \nucond_{0,h} (\rmd x) \; .
\end{align}
Cf. \cite[Proposition 1]{heffernan:resnick:2007} and \cite[Lemma 2.1]{kulik:soulier:2015}.  Define
the probability measure $\nucondspectral_h$ on $\{-1,1\}\times\Rset^h$ by
\begin{align*}
  \nucondspectral_h(\{\epsilon\}\times A)
  = \int_{\epsilon u_0>1} \int_A \nucond_{0,h}(\rmd u_0,u_0^{\scalingexp_1}\rmd u_1,\dots,u_0^{\scalingexp_h}\rmd u_h) \; ,
\end{align*}
for $\epsilon\in\{-1,1\}$ and $A$ a Borel subset of $\Rset^h$.  Let $\vbW=(W_0,W_1,\dots,W_h)$ be an
$\Rset^{h+1}$ valued random vector with distribution $\nucondspectral_h$.  Then, for every Borel subsets
$A\subset \left(\Rset\setminus\{0\}\right)\times\Rset^h$, we have
\begin{align}\label{eq:representation-nucond}
  \nucond_{0,h}(A) = \int_{0}^\infty \pr((sW_0,s^{\scalingexp_1} W_1 , \dots , s^{\scalingexp_h} W_h) \in A) \, \alpha s^{-\alpha-1} \rmd s \; .
\end{align}
See \cite[Section~2.4]{kulik:soulier:2015}. Let $Y_0$ be a Pareto random variable with tail index
$\alpha$, independent of $\vbW_h$.
Then, as $x\to\infty$,
\begin{align}
   \label{eq:lim-cevcdf}
  \law{  \left( \frac{X_0}{x}, \frac{X_1}{\scalfunccev_1(x)}, \dots, \frac{X_h}{\scalfunccev_h(x)} \right) \mid |X_0|>x} \convdistr Y_0\vbW_h \; .
\end{align}
In particular, we define for $h>0$ the distribution function $\cevcdfuniv_h$ on $\Rset$:
\begin{align}
  \label{eq:def-cevcdfuniv}
  \cevcdfuniv_h(y) & = \pr(Y_0W_h\leq y) = \lim_{x\to\infty} \pr(X_h \leq \scalfunccev_h(x) y \mid |X_0|>x)  \; ,
\end{align}
for all $y\in\Rset\setminus\{0\}$ since the distribution of $Y_0W_h$ is continuous at all points
except possibly~0.

The goal of this paper is to complement the investigation of this assumption started
in~\cite{kulik:soulier:2015} by providing valid statistical procedures to estimate the conditional
 scaling functions $\scalfunccev_h$, the conditional limiting distributions $\cevcdfuniv_h$ and
scaling exponents $\scalingexp_h$.

\section{Statistical inference}
\label{sec:fclt-unfeasible}
Let $F_0$ be a distribution of $|X_0|$.  All our results we be proved under the following
$\beta$-mixing assumptions.
\begin{hypothesis}
  \label{hypo:basics}
  \begin{enumerate}[{\rm ({A}1)}]
  \item \label{item:beta-mixing} The sequence $\{X_j,j\in\Zset\}$ is $\beta$-mixing with rate
    $\{\beta_n,n\geq1\}$.
  \item\label{item:assumptions-on-sequences} There exist a non decreasing sequence $\tepseq$, non decreasing sequences of integers
    $\dhinterseq$ and $\dhinterseqsmall$ such that
    \begin{align}
    \label{eq:rn}
      \lim_{n\to\infty} \dhinterseqsmall
      & = \lim_{n\to\infty} \dhinterseq = \lim_{n\to\infty} \tepseq = \lim_{n\to\infty} \frac{\dhinterseq}{\dhinterseqsmall} = \infty \; ,  \\
      \label{eq:beta}
      \lim_{n\to\infty} \frac{n}{\dhinterseq} \beta_{\dhinterseqsmall} & = 0 \; , \\
      \label{eq:rnbarFun0}
      \lim_{n\to\infty}\tepseq & =n\bar{F}_0(\tepseq) = \infty  \; , \ \ \lim_{n\to\infty} \dhinterseq\bar{F}_0(\tepseq)=0 \; .
    \end{align}
\end{enumerate}
\end{hypothesis}

\subsection{Non parametric estimation of the limiting conditional distribution}
\label{sec:est-distrib}
In order to define an estimator of $\cevcdfuniv_h$, we must first consider the infeasible
statistic
\begin{align}
  \label{eq:ted-extrindep}
  \TEDextrindep_{h,n}(s,y) & = \frac{1}{n\tail{F_0}(\tepseq)} \sum_{j=1}^{n-h} \ind{|X_j|>\tepseq s,X_{j+h} \leq
    \scalfunccev_{h}(\tepseq)y} \; .
\end{align}
Then, \Cref{hypo:conditional-indep-measure} and the homogeneity property \eqref{eq:homogeneity-cev} imply that for all $s>0$ and $y\in\Rset$,
\begin{align*}
  \lim_{n\to\infty} \esp[  \TEDextrindep_{h,n}(s,y)] &=\lim_{n\to\infty}\frac{n-h}{n}\frac{\pr(|X_0|>\tepseq s,X_h \leq b(\tepseq) y)}{\tail{F}_0(u_n)} \\
  & =\nucond_{0,h}((s,\infty)\times\Rset^{h-1}\times (-\infty,y])= s^{-\alpha} \cevcdfuniv_h(s^{-\scalingexp_h}y) \; .
\end{align*}
We consider weak convergence of the processes $\widetilde\TEPextrindep_{h,n}$ and
$\TEPextrindep_{h,n}$ defined on $(0,\infty)\times\Rset$ by
\begin{align*}
  \widetilde\TEPextrindep_{h,n}(s,y)  & = \sqrt{n\tail{F_0}(\tepseq)} \{\TEDextrindep_{h,n}(s,y) - \esp[\TEDextrindep_{h,n}(s,y)]\} \;, \\
  \TEPextrindep_{h,n}(s,y)  & = \sqrt{n\tail{F_0}(\tepseq)} \{\TEDextrindep_{h,n}(s,y) - s^{-\alpha} \cevcdfuniv_h(s^{-\scalingexp_h}y)\}\; .
\end{align*}

\begin{hypothesis}
  \label{hypo:fclt-infeasible}
  For all $s, t>0$,
    \begin{align}
      \label{eq:conditiondhs}
      \lim_{\ell\to\infty} \limsup_{n\to\infty} \frac{1}{\tail{F}_0(\tepseq)} \sum_{\ell<|j|\leq\dhinterseq} \pr(|X_0|>\tepseq s,|X_j|>\tepseq t) = 0 \; .
    \end{align}
\end{hypothesis}
\begin{hypothesis}
  \label{item:B2}
  There exists $s_0\in (0,1)$ such that
  \begin{align}
    \label{eq:no-bias-orderstat-bivariate}
    \lim_{n\to\infty}\sqrt{n\tail{F}_0(\tepseq)} \sup_{s\geq s_0,y\in\Rset}
    \left|\frac{\pr(|X_0|>\tepseq s,X_h \leq b(\tepseq) y)}{\tail{F}_0(u_n)}  - s^{-\alpha} \cevcdfuniv_h(s^{-\scalingexp_h} y)\right|=0\;.
  \end{align}
\end{hypothesis}

\begin{remark}
  \label{remark:comments}
  An assumptions similar to (\ref{eq:conditiondhs}) is unavoidable. Its purpose is to prove the
  convergence of the intrablock variance in the blocking method and tightness. The present one is
  taken from~\cite{kulik:soulier:wintenberger:2015}. Similar ones have been considered
  in~\cite{rootzen:2009}, \cite{drees:rootzen:2010} and \cite{drees:segers:warchol:2015}.  Some of
  these conditions have been checked directly for extremally dependent time series like GARCH(1,1)
  or ARMA models (see e.g. \cite{drees:2002tail}), or for Markov chains that satisfy a drift
  condition (cf. \cite{kulik:soulier:wintenberger:2015}). This assumption will be checked in
  \Cref{sec:examples} for some specific models. \Cref{item:B2} is unavoidable if one wants to remove
  bias. This will not be discussed in the paper. The condition holds for \textit{some} sequences
  $u_n$.
\end{remark}
Let $\TEPextrindep_h$ be a the Gaussian process on $(0,\infty)\times\Rset$ with covariance
$\cov(\TEPextrindep_h(s,y),\TEPextrindep_h(t,z)) = (s\vee t)^{-\alpha} \Psi_h((s\vee
t)^{-\scalingexp_h}(y\wedge z))$,
$s,t>0$, $y,z\in\Rset$.
We note that
\begin{align*}
  \bmotion(u)=\TEPextrindep_h(1,\Psi_h^{-1}(u))\;,\ \  u\in (0,1)\;,
\end{align*}
is a standard Brownian motion on $(0,1)$.
The following theorem establishes weak convergence of the tail empirical process
and forms the basis for statistical inference on $\cevcdfuniv_h$.  Its proof is given
in~\Cref{sec:proof-of-theo:tep-extrindep}.
\begin{theorem}
  \label{theo:tep-extrindep}
  Let $\sequence{X}$ be a strictly stationary regularly varying sequence such that
  \Cref{hypo:conditional-indep-measure} with extremal independence at all lags. Assume moreover that
  \Cref{hypo:basics,hypo:fclt-infeasible} hold and that the function $\cevcdfuniv_h$ is continuous
  on $\Rset$. Then the process $\widetilde\TEPextrindep_{h,n}$ converges weakly in
  $\ell^{\infty}([s_0,\infty)\times \Rset)$ to~$\TEPextrindep_h$. If moreover \Cref{item:B2} holds, then
  $\TEPextrindep_{h,n}$ converges weakly in $\ell^{\infty}([s_0,\infty)\times \Rset)$
  to~$\TEPextrindep_h$.
\end{theorem}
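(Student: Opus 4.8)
The plan is to establish weak convergence of $\widetilde\TEPextrindep_{h,n}$ in $\ell^\infty([s_0,\infty)\times\Rset)$ by the classical route: convergence of finite dimensional distributions plus asymptotic tightness. The blocking method of \cite{rootzen:2009} (see also \cite{drees:rootzen:2010}, \cite{kulik:soulier:wintenberger:2015}) is the natural vehicle, since \Cref{item:beta-mixing} and \eqref{eq:beta} allow us to replace the sum over $j=1,\dots,n-h$ by a sum of $m_n := \lfloor n/\dhinterseq\rfloor$ independent blocks, each of length $\dhinterseq$, with a separating gap of length $\dhinterseqsmall$; the error incurred is controlled by $\tfrac{n}{\dhinterseq}\beta_{\dhinterseqsmall}\to0$, and the contribution of the discarded short blocks is negligible because $\dhinterseqsmall/\dhinterseq\to0$. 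After this reduction we work with i.i.d.\ block sums $B_{n,k}(s,y) = \sum_{j\in(\text{block }k)} \ind{|X_j|>\tepseq s, X_{j+h}\le \scalfunccev_h(\tepseq)y}$, suitably centered and scaled by $\sqrt{n\tail{F_0}(\tepseq)}$.

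For the finite dimensional distributions, I would apply a Lindeberg–Feller central limit theorem to the row-wise i.i.d.\ triangular array of centered block sums. The Lindeberg condition is met because each indicator is bounded by $1$, each block has length $\dhinterseq = o(1/\tail F_0(\tepseq))$ by \eqref{eq:rnbarFun0}, and the normalization is $\sqrt{n\tail F_0(\tepseq)}\to\infty$ by \eqref{eq:rnbarFun0}; so the individual summands are uniformly small. The key computation is the limiting covariance. Writing the variance of a block sum and using stationarity, one gets a diagonal term $\sim \dhinterseq \, \pr(|X_0|>\tepseq(s\vee t), X_h\le\scalfunccev_h(\tepseq)(y\wedge z))$, which after multiplying by $m_n/(n\tail F_0(\tepseq)) \to 1/(\dhinterseq\tail F_0(\tepseq))\cdot\dhinterseq = $ (the right constant) converges, by \Cref{hypo:conditional-indep-measure} and the homogeneity \eqref{eq:homogeneity-cev}, to $(s\vee t)^{-\alpha}\cevcdfuniv_h((s\vee t)^{-\scalingexp_h}(y\wedge z))$, which is exactly $\cov(\TEPextrindep_h(s,y),\TEPextrindep_h(t,z))$. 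The off-diagonal contributions within a block, namely the sum over $0<|j|\le\dhinterseq$ of $\pr(|X_0|>\tepseq s, |X_j|>\tepseq s)$ (and its mixed analogues), are exactly what \Cref{hypo:fclt-infeasible} kills: \eqref{eq:conditiondhs} shows the tail $\ell<|j|\le\dhinterseq$ is negligible uniformly in $n$ after sending $\ell\to\infty$, while for the finitely many fixed lags $|j|\le\ell$ the extremal independence hypothesis \eqref{eq:extremal-indep-joint-exceed} forces each term to vanish in the limit. This is the step I expect to be the main obstacle: bookkeeping the covariance of block sums indexed by pairs $(s,y),(t,z)$, separating the ``coincidence'' term $j=0$ (which produces the limit) from the cross terms $j\ne0$ (which must vanish), and verifying that the centering by $\esp[\TEDextrindep_{h,n}]$ rather than by the limit is harmless for $\widetilde\TEPextrindep_{h,n}$ — for that process no bias assumption is needed.

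For tightness I would invoke an entropy or bracketing criterion adapted to the blocking decomposition, as in \cite{drees:rootzen:2010} or \cite[Section~???]{kulik:soulier:wintenberger:2015}. The index set $[s_0,\infty)\times\Rset$ is handled by monotonicity in both coordinates: $\TEDextrindep_{h,n}(s,y)$ is nonincreasing in $s$ and nondecreasing in $y$, so the class of indicators is a VC-type class (products of half-lines), giving a manageable bracketing/covering number. The fluctuation control reduces, via a maximal inequality for sums of independent processes, to a bound on the increments of the covering variance over small rectangles; continuity of $\cevcdfuniv_h$ on $\Rset$ ensures the limiting covariance has no jumps, so small rectangles have small variance, and \eqref{eq:conditiondhs} again controls the within-block cross terms uniformly. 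One also needs to check behavior as $s\to\infty$ (or $y\to\pm\infty$): there the mean tends to $0$ (resp.\ to $s^{-\alpha}$) and the variance is dominated by $s^{-\alpha}$, which is small, so no mass escapes to the boundary and $\ell^\infty$-tightness on the full unbounded strip follows. Finally, combining fidis and tightness gives $\widetilde\TEPextrindep_{h,n}\convweak\TEPextrindep_h$.

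The second assertion is immediate: $\TEPextrindep_{h,n}(s,y) = \widetilde\TEPextrindep_{h,n}(s,y) + \sqrt{n\tail F_0(\tepseq)}\,\{\esp[\TEDextrindep_{h,n}(s,y)] - s^{-\alpha}\cevcdfuniv_h(s^{-\scalingexp_h}y)\}$, and by \Cref{item:B2} the deterministic bias term converges to $0$ uniformly over $s\ge s_0$, $y\in\Rset$ (the $n-h$ versus $n$ discrepancy in the prefactor contributes $O(h\tail F_0(\tepseq)\sqrt{n\tail F_0(\tepseq)}) = o(1)$ as well). Hence $\TEPextrindep_{h,n}$ and $\widetilde\TEPextrindep_{h,n}$ have the same weak limit $\TEPextrindep_h$, which completes the proof.
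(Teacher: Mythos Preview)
Your proposal is correct and follows essentially the same route as the paper. The paper packages the blocking/Lindeberg/VC-class argument into two abstract results on tail array sums (\Cref{lem:clt-array-fidi} for fidis and \Cref{thm:fclt-array} for tightness), and then specializes to $\psi(x_0,\dots,x_h)=\ind{|x_0|>s_0}$ and the class $\mcg_0=\{\phi_{s,y}=\ind{|x_0|>s,\,x_h\le y}\}$ with the semimetric $\rho_h((s,y),(t,z))=|s-t|+|\cevcdfuniv_h(y)-\cevcdfuniv_h(z)|$; your sketch simply unpacks those same steps directly, including the use of extremal independence plus \eqref{eq:conditiondhs} to kill the off-diagonal block-covariance terms and the VC argument for tightness.
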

We now need proxies to replace $u_n$ and $b(u_n)$ which are unknown in order to obtain a feasible
statistical procedure. As usual, $u_n$ will be replaced by an order statistic. To estimate the
scaling functions $\scalfunccev_h$ we will exploit their representations in terms of conditional
mean. Therefore, we need additional conditions.
\begin{hypothesis}
  \label{hypo:moments}
  There exists $\delta>0$ and $s_0>0$ such that
    \begin{align} 
      \lim_{n\to\infty} r_n \{n \tail{F}_0(\tepseq)\}^{-\delta/2} = 0 \; , 
    \end{align}
  \begin{gather}
    \label{eq:psi-second-moment-extrindep}
    \sup_{n\geq1} \frac1{\tail{F}_0(\tepseq)}  \esp\left[
    \left|   \frac{X_{h}}{\scalfunccev_h(\tepseq)}\right|^{2+\delta}\ind{\{X_0> s_0\tepseq\}}\right]  < \infty \; , \\
    \label{eq:conditiondhs-ext-extrindep}
    \lim_{\ell\to\infty} \limsup_{n\to\infty} \frac{1}{\tail{F}_0(\tepseq)}
     \sum_{\ell< |j| \leq \dhinterseq} \esp\left[\frac{|X_{h}|}{\scalfunccev_h(\tepseq)}
    \frac{|X_{j+h}|}{\scalfunccev_h(\tepseq)} \ind{\{X_0> s_0\tepseq\}}\ind{\{X_j> s_0\tepseq\}}  \right]  = 0 \;      , \\
    \label{eq:no-bias-sums}
    \lim_{n\to\infty} \sup_{s\geq s_0} \sqrt{n\tail{F}_0(u_n)}  \left|\frac{\esp\left[\frac{|X_h|}{b_h(u_n)}\ind{|X_0|> u_n s}\right]}{\tail{F}_0(u_n)}
    - \esp[|W_h|] s^{\kappa_h-\alpha} \right|  = 0 \; .
  \end{gather}
\end{hypothesis}
Condition~(\ref{eq:psi-second-moment-extrindep}) requires $\alpha>2$ and implies that the sequence
$(\scalfunccev_h^{-1}(\tepseq)X_h)^2$ is uniformly integrable conditionally on $|X_0|>\tepseq$ and
therefore,
\begin{align}
  \label{eq:mh}
  \lim_{x\to\infty} \esp \left[ \scalfunccev_h^{-i} (x) |X_h|^i  \mid X_0 > x \right] =
  \int_{-\infty}^\infty |y|^i \, \cevcdfuniv_h(\rmd y) =\esp[Y_0^i]\esp[|W_h|^i] < \infty \; ,  \ \ i=1,2 \; .
\end{align}
Since the function
$\scalfunccev_h$ and the limiting distribution $\cevcdfuniv_h$ are defined up to a scaling constant,
we can and will assume without loss of generality that
\begin{align*}
  \esp[|W_h|]=\int_{-\infty}^\infty |y| \, \cevcdfuniv_h(\rmd y) = \int_1^\infty \int_{-\infty}^\infty |x_h| \nucond_{0,h}(\vbx)   \;  .
\end{align*}
Condition (\ref{eq:conditiondhs-ext-extrindep}) is again unavoidable and must be checked for
specific models. Condition~(\ref{eq:no-bias-sums}) is a bias condition which will not be further
discussed.

Set $\statinterseq=n\tail{F}_0(\tepseq)$ and let
$\orderstat[|X|]{n}{1}\leq \orderstat[|X|]{n}{2}\leq \cdots \leq \orderstat[|X|]{n}{n}$ be the order
statistics of $|X_1|,\ldots,|X_n|$.  Define an estimator of $\scalfunccev_h(\tepseq)$ by
\begin{align}
  \label{eq:def-scalingfunc-estimator}
  \widehat\scalfunccev_{h,n} = \frac{1}{\statinterseq} \sum_{j=1}^{n-h} |X_{j+h}|
  \ind{\{|X_j|>\orderstat[|X|]{n}{n-\statinterseq}\}} \; .
\end{align}

\begin{corollary}
  \label{coro:tepindep-semifeasible}
  Let the assumptions of \Cref{theo:tep-extrindep} and \Cref{hypo:moments}
  hold with extremal independence at all lags.  Then
  \begin{multline*}
    \left(\TEPextrindep_{h,n}, \sqrt{k}\left(\frac{\orderstat[|X|]{n}{n-k}}{\tepseq}-1\right) ,
      \sqrt{k}\left( \frac{\widehat \scalfunccev_{h,n}}{\scalfunccev_h(\tepseq)} - 1    \right)  \right) \\
    \convweak \left(\TEPextrindep_h, \alpha^{-1} \bmotion(1), \int_0^1 |\cevcdfuniv_h^{-1}(u)|
      \rmd\bbridge(u) + \alpha^{-1}\scalingexp_h \bmotion(1) \right) \; ,
  \end{multline*}
  where $\bmotion(u) =
  \TEPextrindep_h(1,\cevcdfuniv_h^{-1}(u))$ is a standard Brownian motion and
  $\bbridge(u) = \bmotion(u) - u\bmotion(1)$ is a standard Brownian bridge on $[0,1]$.
\end{corollary}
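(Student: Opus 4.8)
\noindent The plan is to combine the functional convergence of \Cref{theo:tep-extrindep} with a Vervaat--type inversion for the intermediate order statistic and a linearisation of $\widehat\scalfunccev_{h,n}$ around the oracle threshold~$\tepseq$. Introduce
\begin{align*}
  \widehat Z_{h,n}(s) &= \frac1{\statinterseq}\sum_{j=1}^{n-h}\frac{|X_{j+h}|}{\scalfunccev_h(\tepseq)}\ind{|X_j|>\tepseq s}\;,\\
  m_n(s) &= \esp[\widehat Z_{h,n}(s)]\;,\qquad \widetilde Z_{h,n}(s) = \sqrt{\statinterseq}\bigl(\widehat Z_{h,n}(s)-m_n(s)\bigr)\;,
\end{align*}
so that $\widehat\scalfunccev_{h,n}/\scalfunccev_h(\tepseq) = \widehat Z_{h,n}(\widehat s_n)$ with $\widehat s_n := \orderstat[|X|]{n}{n-\statinterseq}/\tepseq$, and set $\bar e_{h,n}(s) = \sqrt{\statinterseq}\bigl(\statinterseq^{-1}\sum_{j=1}^{n-h}\ind{|X_j|>\tepseq s} - s^{-\alpha}\bigr)$, the tail empirical process of $\{|X_j|\}$, which is the $y\to\infty$ limit of $\TEPextrindep_{h,n}(s,y)$. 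The step I expect to be the main obstacle is the \emph{joint} weak convergence of $(\TEPextrindep_{h,n},\bar e_{h,n},\widetilde Z_{h,n})$ to a Gaussian limit $(\TEPextrindep_h,e,Z_h)$, where $e(s) = \TEPextrindep_h(s,+\infty)$, so that $e(1) = \bmotion(1)$ since $\bmotion(u) = \TEPextrindep_h(1,\cevcdfuniv_h^{-1}(u))$ and $\cevcdfuniv_h^{-1}(u)\to+\infty$ as $u\uparrow1$. Finite linear combinations of the coordinates of this triple, evaluated at finitely many points, are again centred sums of functions of the $\beta$-mixing blocks, so by the Cram\'er--Wold device the joint statement reduces to the programme already underlying \Cref{theo:tep-extrindep}: pass to independent blocks via \Cref{hypo:basics}, identify the limiting covariance (only the lag-$0$ contribution survives, by extremal independence together with \eqref{eq:conditiondhs} and \eqref{eq:conditiondhs-ext-extrindep}), verify a Lyapunov condition for the block sums, and prove tightness. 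The genuinely new point relative to \Cref{theo:tep-extrindep} is that $\widehat Z_{h,n}$ carries the \emph{unbounded} summand $\scalfunccev_h(\tepseq)^{-1}|X_{j+h}|$: here \eqref{eq:psi-second-moment-extrindep} (hence $\alpha>2$) supplies the Lyapunov condition and the oscillation control needed for tightness of $\widetilde Z_{h,n}$, while \eqref{eq:conditiondhs-ext-extrindep} handles the corresponding intra-block covariance.

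Granting this, the second coordinate of the corollary comes from a Vervaat-type argument. Since $\widehat s_n$ is, up to an $O(\statinterseq^{-1})$ error, the empirical $(1-\statinterseq/n)$-quantile of $\{|X_j|\}$, inverting the empirical counting function $s\mapsto\statinterseq^{-1}\sum_j\ind{|X_j|>\tepseq s}$ at the value $1$, and using uniform consistency, stochastic equicontinuity of $\bar e_{h,n}$ near $s=1$, and the bias bound of \Cref{item:B2} in the limit $y\to\infty$, one gets $\sqrt{\statinterseq}(\widehat s_n-1) = \alpha^{-1}\bar e_{h,n}(1) + o_\pr(1)$, hence $\sqrt{\statinterseq}(\widehat s_n-1)\convweak\alpha^{-1}\bmotion(1)$ jointly with the rest.

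For the scaling-function estimator, decompose
\begin{align*}
  \sqrt{\statinterseq}\left(\frac{\widehat\scalfunccev_{h,n}}{\scalfunccev_h(\tepseq)}-1\right) = \widetilde Z_{h,n}(\widehat s_n) + \sqrt{\statinterseq}\bigl(m_n(\widehat s_n)-1\bigr)\;.
\end{align*}
Stochastic equicontinuity of $\widetilde Z_{h,n}$ (from the tightness above) together with $\widehat s_n\to1$ gives $\widetilde Z_{h,n}(\widehat s_n) = \widetilde Z_{h,n}(1) + o_\pr(1)\convweak Z_h(1)$. For the second term, \eqref{eq:no-bias-sums}, after absorbing the harmless factor $(n-h)/n$, shows that $m_n(s)\to m(s) := s^{\scalingexp_h-\alpha}$ uniformly on $[s_0,\infty)$ at rate $o(\statinterseq^{-1/2})$, using the normalisation $m(1) = \int |y|\,\cevcdfuniv_h(\rmd y) = 1$; hence by the delta method $\sqrt{\statinterseq}(m_n(\widehat s_n)-1) = (\scalingexp_h-\alpha)\sqrt{\statinterseq}(\widehat s_n-1) + o_\pr(1)\convweak (\scalingexp_h-\alpha)\alpha^{-1}\bmotion(1)$.

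It remains to identify $Z_h(1)$. Writing $\scalfunccev_h(\tepseq)^{-1}|X_{j+h}| = \int_0^\infty\ind{|X_{j+h}|>\scalfunccev_h(\tepseq)t}\,\rmd t$ and $\ind{|X_{j+h}|>\scalfunccev_h(\tepseq)t} = 1 - \ind{X_{j+h}\le\scalfunccev_h(\tepseq)t} + \ind{X_{j+h}\le-\scalfunccev_h(\tepseq)t}$ yields $\widehat Z_{h,n}(1) = \int_0^\infty\bigl\{\widehat E_n - \TEDextrindep_{h,n}(1,t) + \TEDextrindep_{h,n}(1,-t)\bigr\}\,\rmd t$, with $\widehat E_n = \statinterseq^{-1}\sum_{j=1}^{n-h}\ind{|X_j|>\tepseq}$; centring, multiplying by $\sqrt{\statinterseq}$ and interchanging the limit with the $\rmd t$-integral, which is legitimate by the uniform integrability furnished by \eqref{eq:psi-second-moment-extrindep}, gives
\begin{align*}
  Z_h(1) = \int_0^\infty\bigl\{e(1) - \TEPextrindep_h(1,t) + \TEPextrindep_h(1,-t)\bigr\}\,\rmd t\;,
\end{align*}
the integrand decaying fast enough for the integral to be a.s.\ finite by the same moment condition. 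Since $\TEPextrindep_h(1,y) = \bmotion(\cevcdfuniv_h(y))$ and $e(1) = \bmotion(1)$, the substitution $y = \cevcdfuniv_h^{-1}(u)$ followed by an integration by parts, the boundary terms vanishing by continuity of $\cevcdfuniv_h$ and \eqref{eq:psi-second-moment-extrindep}, turns this into $Z_h(1) = \int_0^1|\cevcdfuniv_h^{-1}(u)|\,\rmd\bmotion(u)$. Collecting the three coordinates,
\begin{align*}
  \sqrt{\statinterseq}\left(\frac{\widehat\scalfunccev_{h,n}}{\scalfunccev_h(\tepseq)}-1\right)\convweak \int_0^1|\cevcdfuniv_h^{-1}(u)|\,\rmd\bmotion(u) + (\scalingexp_h-\alpha)\alpha^{-1}\bmotion(1)\;,
\end{align*}
jointly with $\TEPextrindep_{h,n}\convweak\TEPextrindep_h$ and $\sqrt{\statinterseq}(\widehat s_n-1)\convweak\alpha^{-1}\bmotion(1)$, all limits being measurable functionals of $\TEPextrindep_h$. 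Finally, with $\bbridge(u) = \bmotion(u) - u\bmotion(1)$ and $\int_0^1|\cevcdfuniv_h^{-1}(u)|\,\rmd u = m(1) = 1$ one has $\int_0^1|\cevcdfuniv_h^{-1}(u)|\,\rmd\bmotion(u) - \bmotion(1) = \int_0^1|\cevcdfuniv_h^{-1}(u)|\,\rmd\bbridge(u)$, so the last display equals $\int_0^1|\cevcdfuniv_h^{-1}(u)|\,\rmd\bbridge(u) + \alpha^{-1}\scalingexp_h\bmotion(1)$, which is the asserted third coordinate; this proves the corollary.
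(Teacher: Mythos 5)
Your proposal is correct and follows essentially the same route as the paper: joint weak convergence of the tail empirical process indexed by the union of the indicator class and the unbounded class $\{\psi_s\}$ (which the paper packages as \Cref{cor:sums}, built on \Cref{thm:fclt-array}), Vervaat's lemma for the intermediate order statistic, the decomposition of $\sqrt{k}\bigl(\widehat\scalfunccev_{h,n}/\scalfunccev_h(\tepseq)-1\bigr)$ into a stochastic term evaluated at the random threshold plus a drift handled by the delta method, and the identification $\tepextrindep_h(\psi_1)=\int_0^1|\cevcdfuniv_h^{-1}(u)|\,\rmd\bmotion(u)$ together with the normalisation $\int_0^1|\cevcdfuniv_h^{-1}(u)|\,\rmd u=1$ to pass from $\bmotion$ to $\bbridge$. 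The only differences are bookkeeping: you centre at $m_n$ and invoke \eqref{eq:no-bias-sums} separately, whereas the paper centres directly at $\nucond_{0,h}(\psi_{\xi_n})=\xi_n^{\scalingexp_h-\alpha}$, and you spell out the stochastic-integral identification that the paper states without proof.
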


\begin{remark}
  The moment conditions in \Cref{hypo:moments} may seem to be too restrictive. In fact, we can
  consider a family of estimators $\widehat b_{h,n}(\zeta)$, where $|X_{j+h}|$
  in~\eqref{eq:def-scalingfunc-estimator} is replaced with $|X_{j+h}|^\zeta$ with some
  $\zeta>0$. However, we do not pursue it in this paper.
\end{remark}
Define now the following estimator of $\cevcdfuniv_h$:
\begin{align}
  \hatcevcdf_{h,n}(y) = \frac{1}{\statinterseq} \sum_{j=1}^{n-h}
  \ind{\{X_j>\orderstat{n}{n-\statinterseq}\}} \ind{\{X_{j+h} \leq \widehat\scalfunccev_{h,n} y\}}
  \label{eq:hatphi}
  = \TEDextrindep_{h,n} \left(\frac{\orderstat{n}{n-\statinterseq}}{\tepseq},
    \frac{\widehat\scalfunccev_{h,n}}{\scalfunccev_h(\tepseq)}y\right) \; .
\end{align}
The theory for this estimator is easily obtained by applying \Cref{coro:tepindep-semifeasible} and
the $\delta$-method.

\begin{corollary}
  \label{theo:clt-psi}
  Under the assumptions of \Cref{coro:tepindep-semifeasible} and if the function $\cevcdfuniv_h$ is
  differentiable,
  $\sqrt{\statinterseq}\left\{\hatcevcdf_{h,n} - \cevcdfuniv_h\right\}\convweak \Lambda_h$ in
    $\ell^\infty([s_0,\infty))$, where the process $\Lambda_h$ is defined by
  \begin{align}
    \Lambda_h(y) & = \bbridge(\cevcdfuniv_h(y)) +  y\cevcdfuniv_h'(y) \int_0^1 |\cevcdfuniv_h^{-1}(u)|\rmd \bbridge(u)  \; ,
                \label{eq:def-limit-Lambda}
  \end{align}
  where $\bbridge$ is the standard Brownian bridge.

\end{corollary}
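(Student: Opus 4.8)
The plan is to deduce \Cref{theo:clt-psi} from \Cref{coro:tepindep-semifeasible} by a delta-method argument built on the representation \eqref{eq:hatphi}. Write $S_n=\orderstat{n}{n-\statinterseq}/\tepseq$ and $T_n=\widehat\scalfunccev_{h,n}/\scalfunccev_h(\tepseq)$ for the two random scaling factors occurring in \eqref{eq:hatphi}, so that $\hatcevcdf_{h,n}(y)=\TEDextrindep_{h,n}(S_n,T_ny)$; \Cref{coro:tepindep-semifeasible} supplies the \emph{joint} weak limit of $(\TEPextrindep_{h,n},\sqrt{\statinterseq}(S_n-1),\sqrt{\statinterseq}(T_n-1))$, namely $(\TEPextrindep_h,\alpha^{-1}\bmotion(1),\int_0^1|\cevcdfuniv_h^{-1}(u)|\,\rmd\bbridge(u)+\alpha^{-1}\scalingexp_h\bmotion(1))$. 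By the definition of $\TEPextrindep_{h,n}$ one has $\TEDextrindep_{h,n}(s,y)=s^{-\alpha}\cevcdfuniv_h(s^{-\scalingexp_h}y)+\TEPextrindep_{h,n}(s,y)/\sqrt{\statinterseq}$, so substituting $s=S_n$ and replacing $y$ by $T_ny$ yields
\begin{align*}
  \sqrt{\statinterseq}\bigl(\hatcevcdf_{h,n}(y)-\cevcdfuniv_h(y)\bigr)
  &=\TEPextrindep_{h,n}(S_n,T_ny)\\
  &\quad+\sqrt{\statinterseq}\Bigl(S_n^{-\alpha}\cevcdfuniv_h(S_n^{-\scalingexp_h}T_ny)-\cevcdfuniv_h(y)\Bigr)\;.
\end{align*}
These two terms are treated separately but their limits must be computed jointly; I would therefore pass to an almost sure representation of the convergence in \Cref{coro:tepindep-semifeasible} (legitimate, its limit concentrating on a separable subset of $\ell^\infty$), on which, almost surely, $S_n\to1$, $T_n\to1$, $\sqrt{\statinterseq}(S_n-1)\to\alpha^{-1}\bmotion(1)$, $\sqrt{\statinterseq}(T_n-1)\to\int_0^1|\cevcdfuniv_h^{-1}(u)|\,\rmd\bbridge(u)+\alpha^{-1}\scalingexp_h\bmotion(1)$, and $\TEPextrindep_{h,n}\to\TEPextrindep_h$ in $\ell^\infty([s_0,\infty)\times\Rset)$, with $\TEPextrindep_h$ having uniformly continuous paths.

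For the first term, the asymptotic equicontinuity accompanying the weak convergence of $\TEPextrindep_{h,n}$, the uniform continuity of the paths of $\TEPextrindep_h$, and the convergence $(S_n,T_ny)\to(1,y)$ — uniform for $y$ in compacts, supplemented by a truncation at large $|y|$ where $\TEPextrindep_{h,n}$ and $\TEPextrindep_h$ both reduce to the ordinary tail empirical process and its limit — together give $\sup_y|\TEPextrindep_{h,n}(S_n,T_ny)-\TEPextrindep_h(1,y)|\to0$; hence the first term converges uniformly in $y$ to $\TEPextrindep_h(1,y)=\bmotion(\cevcdfuniv_h(y))$. For the second term I would Taylor-expand $g(s,t,y):=s^{-\alpha}\cevcdfuniv_h(s^{-\scalingexp_h}ty)$ about $(s,t)=(1,1)$: with $\partial_sg(1,1,y)=-\alpha\cevcdfuniv_h(y)-\scalingexp_hy\cevcdfuniv_h'(y)$ and $\partial_tg(1,1,y)=y\cevcdfuniv_h'(y)$, and using that these first-order partials depend continuously on $(s,t)$ uniformly in $y$ (this is where the differentiability of $\cevcdfuniv_h$, together with boundedness and regularity of $y\mapsto y\cevcdfuniv_h'(y)$, enters), the second term equals $\sqrt{\statinterseq}(S_n-1)\bigl(-\alpha\cevcdfuniv_h(y)-\scalingexp_hy\cevcdfuniv_h'(y)\bigr)+\sqrt{\statinterseq}(T_n-1)\,y\cevcdfuniv_h'(y)+o(1)$ uniformly in $y$; plugging in the almost sure limits of $\sqrt{\statinterseq}(S_n-1)$ and $\sqrt{\statinterseq}(T_n-1)$, the two contributions proportional to $\scalingexp_h$ cancel and it converges uniformly in $y$ to $-\cevcdfuniv_h(y)\bmotion(1)+y\cevcdfuniv_h'(y)\int_0^1|\cevcdfuniv_h^{-1}(u)|\,\rmd\bbridge(u)$.

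Adding the two limits and using $\bbridge(u)=\bmotion(u)-u\bmotion(1)$, the leading part $\bmotion(\cevcdfuniv_h(y))-\cevcdfuniv_h(y)\bmotion(1)$ collapses to $\bbridge(\cevcdfuniv_h(y))$, so the sum is $\bbridge(\cevcdfuniv_h(y))+y\cevcdfuniv_h'(y)\int_0^1|\cevcdfuniv_h^{-1}(u)|\,\rmd\bbridge(u)=\Lambda_h(y)$, exactly \eqref{eq:def-limit-Lambda}; since every estimate above was uniform in $y$, this upgrades to weak convergence of $\sqrt{\statinterseq}\{\hatcevcdf_{h,n}-\cevcdfuniv_h\}$ to $\Lambda_h$ in $\ell^\infty([s_0,\infty))$. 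Equivalently, the whole computation can be packaged as the functional delta method applied to the map $(\zeta,s,t)\mapsto[y\mapsto\zeta(s,ty)]$ at $(\zeta_0,1,1)$ with $\zeta_0(s,y)=s^{-\alpha}\cevcdfuniv_h(s^{-\scalingexp_h}y)$, whose Hadamard derivative there (tangentially to continuous perturbations) is precisely what the computations above yield.

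I expect the main obstacle to be the uniformity in $y$. On the one hand, substituting the random arguments $(S_n,T_ny)$ into the empirical process $\TEPextrindep_{h,n}$ rigorously needs its asymptotic equicontinuity along the \emph{non-compact} $y$-coordinate, which is why the truncation at large $|y|$ (reducing to the stabilized ordinary tail empirical process, known to be tight) is needed rather than a naive compactness argument. On the other hand, the Taylor remainder for $g$ must be controlled uniformly in $y$, which requires $\cevcdfuniv_h$ to be differentiable with $y\mapsto y\cevcdfuniv_h'(y)$ bounded and suitably regular on $[s_0,\infty)$ — a mild strengthening of the stated differentiability hypothesis that is in any case forced by the regular-variation structure of $\cevcdfuniv_h$. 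All the remaining steps are routine bookkeeping with the joint limits already furnished by \Cref{coro:tepindep-semifeasible}.
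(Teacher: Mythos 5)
Your proposal is correct and follows essentially the same route as the paper: the identical decomposition of $\sqrt{\statinterseq}\{\hatcevcdf_{h,n}(y)-\cevcdfuniv_h(y)\}$ into the term $\TEPextrindep_{h,n}(\xi_n,\theta_n y)$ (handled by consistency of the random arguments and continuity of the limit process) plus a deterministic-function term handled by the delta method, with the same cancellation of the $\scalingexp_h$-contributions and the same final identification $\tepextrindep_h(\phi_{1,y})-\cevcdfuniv_h(y)\tepextrindep_h(\phi_{1,\infty})=\bbridge(\cevcdfuniv_h(y))$. The only differences are cosmetic (you Taylor-expand jointly in $(s,t)$ where the paper expands sequentially, and you spell out the uniformity-in-$y$ and almost-sure-representation details that the paper leaves implicit).
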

\begin{remark}
  The additional term in the limiting distribution is due to the method of estimation of the
  conditional scaling function. Note that the limiting distribution depends only on $\cevcdfuniv_h$
  and therefore can be used for a Kolmogorov-Smirnov type goodness of fit test of the conditional
  distribution.
\end{remark}

\subsection{Estimation of the conditional scaling exponent}
\label{sec:est-scaling-exponent}
We now consider the estimation of the scaling exponent $\scalingexp_h$. We will use the following
result.
\begin{lemma}
  \label{prop:tail-product-cev}
  Let Assumption~\ref{hypo:conditional-indep-measure} hold and assume moreover that
  \begin{align}
    \label{eq:condition-epsilon-product-cev-better}
    \lim_{\epsilon\to0} \limsup_{x\to\infty}  \frac{ \pr( |X_0X_h|>x\scalfunccev_h(x) , |X_0| \leq \epsilon x) } {\pr(|X_0|>x)} = 0 \; .
  \end{align}
  Then $\esp[|W_h|^{\frac{\alpha}{1+\scalingexp_h}}]<\infty$ and
  \begin{align}
    \label{eq:tail-scaling-exponent}
    \lim_{x\to\infty}\frac{ \pr( |X_0X_h| > x\scalfunccev_h(x) y) } {\pr(|X_0|>x)}
    = \esp[|W_h|^{\frac{\alpha}{1+\scalingexp_h}}] y^{-\frac{\alpha}{1+\scalingexp_h}}  \; .
  \end{align}
\end{lemma}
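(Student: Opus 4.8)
The plan is to read the probability $\pr(|X_0X_h|>x\scalfunccev_h(x)y)$ off the vague convergence of \Cref{hypo:conditional-indep-measure}. Write $\rho=1+\scalingexp_h$. Since
\[
  \frac{X_0X_h}{x\,\scalfunccev_h(x)}=\frac{X_0}{x}\cdot\frac{X_h}{\scalfunccev_h(x)}\,,
\]
the event $\{|X_0X_h|>x\scalfunccev_h(x)y\}$ equals $\{(X_0/x,X_h/\scalfunccev_h(x))\in A(y)\}$ with $A(y)=\{(u,v)\in(\Rset\setminus\{0\})\times\Rset:\ |uv|>y\}$. The difficulty is that $A(y)$ is \emph{not} bounded for the metric $d_h^0$ — it contains points with $x_0\to0$ — so the exponent measure cannot be applied to it directly, and condition~\eqref{eq:condition-epsilon-product-cev-better} is exactly what is needed to show that the part of $A(y)$ near $\{x_0=0\}$ is asymptotically negligible. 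Throughout I denote by $\nucond_{0,h}$ also the image of the exponent measure under $(x_0,\dots,x_h)\mapsto(x_0,x_h)$; by~\eqref{eq:representation-nucond} and since $W_0\in\{-1,1\}$ almost surely,
\[
  \nucond_{0,h}(B)=\int_0^\infty\pr\big((sW_0,s^{\scalingexp_h}W_h)\in B\big)\,\alpha s^{-\alpha-1}\,\rmd s
  \qquad\text{for Borel }B\subset(\Rset\setminus\{0\})\times\Rset\,.
\]

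First I would upgrade~\eqref{eq:condition-epsilon-product-cev-better}, whose threshold is $x\scalfunccev_h(x)$, to the threshold $x\scalfunccev_h(x)y$ for every fixed $y>0$:
\begin{align}
  \label{eq:plan-ext}
  \lim_{\epsilon\to0}\limsup_{x\to\infty}\frac{\pr\big(|X_0X_h|>x\scalfunccev_h(x)y,\ |X_0|\le\epsilon x\big)}{\pr(|X_0|>x)}=0\,.
\end{align}
For $y\ge1$ this is immediate since the event shrinks. For $0<y<1$ it follows by a routine change of variables: $x\mapsto x\scalfunccev_h(x)$ is regularly varying with positive index $\rho$, hence has an asymptotic inverse, so for large $x$ one can pick $z=z(x)\to\infty$ with $z\scalfunccev_h(z)=x\scalfunccev_h(x)y$ (using a continuous increasing version) and $z\sim y^{1/\rho}x$; then $\{|X_0X_h|>x\scalfunccev_h(x)y\}=\{|X_0X_h|>z\scalfunccev_h(z)\}$, $\{|X_0|\le\epsilon x\}\subset\{|X_0|\le 2\epsilon y^{-1/\rho}z\}$ for large $x$, and dividing by $\pr(|X_0|>x)\sim y^{\alpha/\rho}\pr(|X_0|>z)$ (regular variation of $\pr(|X_0|>\cdot)$ with index $-\alpha$) reduces~\eqref{eq:plan-ext} to~\eqref{eq:condition-epsilon-product-cev-better} evaluated at $z$ with $\epsilon$ replaced by $2\epsilon y^{-1/\rho}\to0$.

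Next, for $\epsilon>0$ set $A_\epsilon(y)=A(y)\cap\{|u|>\epsilon\}$ and $A^\epsilon(y)=A(y)\cap\{0<|u|\le\epsilon\}$, and write $N_x(\cdot)=\pr(|X_0|>x)^{-1}\pr\big((X_0/x,X_h/\scalfunccev_h(x))\in\cdot\big)$. The set $A_\epsilon(y)$ is bounded away from $\{x_0=0\}$ and its boundary lies in $\{|u|=\epsilon\}\cup\{|uv|=y\}$; by the polar representation these two sets are $\nucond_{0,h}$-null — the first because $|sW_0|=\epsilon$ forces $s=\epsilon$, the second because $s^{\rho}|W_h|=y$ forces $s$ into the countable set of radii associated with the atoms of $|W_h|$. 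Hence \Cref{hypo:conditional-indep-measure} and the portmanteau theorem give $N_x(A_\epsilon(y))\to\nucond_{0,h}(A_\epsilon(y))<\infty$. Now combine $N_x(A(y))=N_x(A_\epsilon(y))+N_x(A^\epsilon(y))$, the continuity from below $\nucond_{0,h}(A_\epsilon(y))\uparrow\nucond_{0,h}(A(y))$ as $\epsilon\downarrow0$, and~\eqref{eq:plan-ext}: since $\limsup_x N_x(A^\epsilon(y))\to0$, this sandwich simultaneously forces $\nucond_{0,h}(A(y))<\infty$ and $\lim_{x\to\infty}N_x(A(y))=\nucond_{0,h}(A(y))$. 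Finally, by Tonelli and $|W_0|=1$,
\[
  \nucond_{0,h}(A(y))=\int_0^\infty\pr\big(s^{\rho}|W_h|>y\big)\,\alpha s^{-\alpha-1}\,\rmd s
  =\esp\!\left[\int_{(y/|W_h|)^{1/\rho}}^{\infty}\alpha s^{-\alpha-1}\,\rmd s\right]
  =y^{-\alpha/\rho}\,\esp\big[|W_h|^{\alpha/\rho}\big]\,;
\]
taking $y=1$ gives $\esp[|W_h|^{\alpha/\rho}]=\nucond_{0,h}(A(1))<\infty$, and the displayed identity is precisely~\eqref{eq:tail-scaling-exponent}.

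I expect the main obstacle to be the sandwich step: the convergence $N_x(A(y))\to\nucond_{0,h}(A(y))$ is not a formal consequence of vague convergence because the target set is unbounded for $d_h^0$, and one must extract from the single qualitative estimate~\eqref{eq:plan-ext} both the finiteness of $\esp[|W_h|^{\alpha/\rho}]$ and the value of the limit. The regular-variation bookkeeping behind~\eqref{eq:plan-ext}, and the verification that $\nucond_{0,h}(\partial A_\epsilon(y))=0$, are routine.
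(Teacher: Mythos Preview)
Your argument is correct and follows essentially the same route as the paper: the paper does not give a self-contained proof here but defers to \cite[Proposition~2]{kulik:soulier:2015} and to the generalized \Cref{lem:mucondphi-existence}, whose proof is exactly your $\epsilon$-splitting --- truncate on $\{|X_0|>\epsilon x\}$, apply the vague convergence of \Cref{hypo:conditional-indep-measure} to the bounded piece, and kill the remainder with~\eqref{eq:condition-epsilon-product-cev-better}. Your additions (the regular-variation bookkeeping that upgrades~\eqref{eq:condition-epsilon-product-cev-better} to a general threshold $y$, the verification that $\partial A_\epsilon(y)$ is $\nucond_{0,h}$-null, and the explicit Tonelli computation yielding $y^{-\alpha/\rho}\esp[|W_h|^{\alpha/\rho}]$) are all sound and simply fill in details the paper leaves implicit.
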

This is \cite[Proposition~2]{kulik:soulier:2015}, where the finiteness of
$\esp[|W_h|^{\frac{\alpha}{1+\scalingexp_h}}]$ is assumed, but it is easily seen that this is
actually a consequence of~(\ref{eq:condition-epsilon-product-cev-better}). At this moment this is
all we need to state our results but we will need to prove in \Cref{sec:tail-array-sums} a
generalized version of~\Cref{prop:tail-product-cev}; see \Cref{lem:mucondphi-existence}. It must be
noted that Condition~(\ref{eq:condition-epsilon-product-cev-better}) does not hold for an \iid\
sequence. See also \Cref{sec:stoch-volat-proc}.

If (\ref{eq:condition-epsilon-product-cev-better}) holds, then the product $X_0X_h$ has tail index
$\alpha/(1+\kappa_h)$.  Hence, we can suggest the following estimation procedure of the scaling
exponent $\kappa_h$.
\begin{itemize}
\item Let $\gamma=1/\alpha$, where $\alpha$ is the tail index of the sequence $\{|X_j|\}$. Estimate
  $\gamma$ using the Hill estimator $\widehat\gamma$ based on an intermediate sequence $k$, \ie
  \begin{align*}
    \hat\gamma = \frac{1}k \sum_{j=1}^n \log(\orderstat[|X|]{n}{n-j+1}/\orderstat[|X|]{n}{n-k}) \; .
  \end{align*}
\item Let $\gamma_h=(1+\scalingexp_h)\gamma=(1+\scalingexp_h)/\alpha$ be estimated by
  $\widehat\gamma_h$, the Hill estimator of the tail index of $|X_0X_h|$, based on the sequence
  $V_j=|X_jX_{j+h}|$, $j=1,\dots,n$ (assuming without loss of generality that we have $n+h$
  observations) and on the same intermediate sequence:
  \begin{align*}
    \hat\gamma_h = \frac{1}k \sum_{j=1}^n \log(\orderstat[V]{n}{n-j+1}/\orderstat[V]{n}{n-k}) \; .
  \end{align*}
\item Estimate $\scalingexp_h=\gamma_h/\gamma-1$ by
\begin{align}
  \label{eq:scaling-exp-estimator}
  \widehat\scalingexp_h = {\widehat\gamma_h}/{\widehat\gamma} - 1 \; .
\end{align}
\end{itemize}
Asymptotic normality of the Hill estimator for beta-mixing sequences is well known.  See \eg\
\cite{drees:2000,drees:2002tail}. The asymptotic normality of $\widehat\kappa_h$ will follow from
the delta method. To state the result, we need additional anti-clustering and second-order
conditions.

\begin{hypothesis}
  \label{hypo:conditions-S}
  For all $s, t>0$,
  \begin{align}
      \label{eq:conditiondhs-product}
      \lim_{\ell\to\infty} \limsup_{n\to\infty} \frac{1}{\tail{F}_0(\tepseq)}
      \sum_{\ell<|j|\leq\dhinterseq} \pr(|X_0X_h|>\tepseq b_h(u_n) s,|X_jX_{j+h}|>\tepseq b_h(u_n) t) = 0 \; .
    \end{align}
    Furthermore,
    \begin{align}
      \label{eq:condition-slog-univ}
      \lim_{\ell\to\infty} \limsup_{n\to\infty}
      &  \frac{1}{\tail{F}_0(\tepseq)}
        \sum_{j=\ell}^{r_n} \esp[\log_+(|X_0|/\tepseq)\log_+(|X_j|/\tepseq)] = 0 \; , \\
      \label{eq:condition-slog-product}
      \lim_{\ell\to\infty} \limsup_{n\to\infty}
      &  \frac{1}{\tail{F}_0(\tepseq)}
        \sum_{j=\ell}^{r_n} \esp[\log_+(|X_0X_h|/(\tepseq b(\tepseq)))\log_+(|X_jX_{j+h}|/(\tepseq b(\tepseq)))] = 0 \; .
    \end{align}
\end{hypothesis}

\begin{theorem}
  \label{thm:clt-kappa}
  Let $\sequence{X}$ be a strictly stationary regularly varying sequence such
  that~\Cref{hypo:conditional-indep-measure} holds with independence at all lags. Assume moreover
  that \Cref{hypo:basics,hypo:fclt-infeasible,hypo:conditions-S,item:B2} and the bound
  (\ref{eq:condition-epsilon-product-cev-better}) hold and that $k=n\tail{F}_0(\tepseq)$ is chosen
  in such a way that
    \begin{align}
      \label{eq:no-bias-orderstat-bivariate-product}
      \lim_{n\to\infty}\sqrt{k} \sup_{s\geq s_0}\left|
      \frac{\pr(|X_0X_h|>\tepseq b_h(\tepseq)s)} {\overline{F}_0(\tepseq)} - s^{-\alpha/(1+\scalingexp_h)} \right|=0
    \end{align}
    for some $s_0\in(0,1)$.  Then
  \begin{align*}
    \sqrt{k}(\widehat\scalingexp_h-\scalingexp_h) \convdistr
    \gauss\left(0,(1+\kappa_h)\esp[||W_h|^{\frac{\alpha}{1+\scalingexp_h}}-1|]\right)\;.
  \end{align*}
\end{theorem}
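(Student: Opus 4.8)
The plan is to combine the known joint asymptotic normality of a pair of Hill estimators based on $\beta$-mixing sequences with the tail behaviour of $|X_0X_h|$ provided by \Cref{prop:tail-product-cev}, and then apply the delta method to the map $(\gamma,\gamma_h)\mapsto \gamma_h/\gamma - 1$. First I would record that, under \Cref{hypo:conditional-indep-measure} together with the bound \eqref{eq:condition-epsilon-product-cev-better}, \Cref{prop:tail-product-cev} gives that $V_j = |X_jX_{j+h}|$ is regularly varying with tail index $\gamma_h^{-1} = \alpha/(1+\scalingexp_h)$ and that $\esp[|W_h|^{\alpha/(1+\scalingexp_h)}]<\infty$; meanwhile $|X_j|$ is regularly varying with index $\alpha=\gamma^{-1}$. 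The bias conditions \eqref{eq:rnbarFun0} and \eqref{eq:no-bias-orderstat-bivariate-product} ensure that the chosen intermediate sequence $k=n\tail{F}_0(\tepseq)$ is such that $\sqrt k$ times the deterministic bias of each Hill estimator vanishes, so both $\sqrt k(\hat\gamma-\gamma)$ and $\sqrt k(\hat\gamma_h-\gamma_h)$ are asymptotically centered.

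Next I would establish the bivariate CLT for $\bigl(\sqrt k(\hat\gamma-\gamma),\sqrt k(\hat\gamma_h-\gamma_h)\bigr)$. The standard route (Drees \cite{drees:2000,drees:2002tail}) is to write each Hill estimator as an integral functional of a tail empirical process: $\hat\gamma = \int_0^1 \widetilde e_n(s)\,\rmd s/s$ type representation for the process built from $|X_j|$, and similarly $\hat\gamma_h$ from $V_j=|X_jX_{j+h}|$. So the core task is the joint weak convergence, in $\ell^\infty$ on an interval bounded away from $0$, of the two tail empirical processes
\begin{align*}
  e_n(s) &= \frac{1}{k}\sum_{j=1}^n \ind{|X_j| > \orderstat[|X|]{n}{n-k}\,s} \;, \qquad
  e_n^{(h)}(s) = \frac{1}{k}\sum_{j=1}^n \ind{|X_jX_{j+h}| > \orderstat[V]{n}{n-k}\,s} \;,
\end{align*}
recentered and scaled by $\sqrt k$, to a bivariate Gaussian process. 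Finite-dimensional convergence follows from a blocking argument and a Lindeberg/CLT for $\beta$-mixing triangular arrays, using \eqref{eq:beta} to decouple blocks; the covariances of the two limit processes and the cross-covariance are computed from the anti-clustering conditions — \eqref{eq:conditiondhs} controls the within-block contribution of $|X_j|$-exceedances, \eqref{eq:conditiondhs-product} that of $|X_jX_{j+h}|$-exceedances, and the diagonal/clustering terms are killed in the limit so that the limit processes have the usual ``extremal independence'' covariance structure $s\wedge t$ after the standard time change. The log-moment anti-clustering conditions \eqref{eq:condition-slog-univ}–\eqref{eq:condition-slog-product} are exactly what is needed to pass from the tail empirical process to the Hill functional (which involves $\log_+$) — they guarantee that the variance of the intrablock sums of $\log_+(|X_0|/\tepseq)$ and $\log_+(|X_0X_h|/(\tepseq b(\tepseq)))$ is negligible beyond a fixed lag, hence tightness of the integrated process. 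Tightness of the tail empirical processes themselves is obtained as in \Cref{theo:tep-extrindep}, again from the anti-clustering bounds.

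Having the joint CLT, I would write $\hat\gamma_h/\hat\gamma - 1 - \scalingexp_h = \hat\gamma_h/\hat\gamma - \gamma_h/\gamma$ and linearize: $\sqrt k(\hat\scalingexp_h-\scalingexp_h) = \gamma^{-1}\sqrt k(\hat\gamma_h-\gamma_h) - \gamma_h\gamma^{-2}\sqrt k(\hat\gamma-\gamma) + o_p(1)$, which is a linear combination of the jointly Gaussian pair, hence Gaussian with mean zero. It then remains to identify the variance as $(1+\scalingexp_h)\esp[\,|\,|W_h|^{\alpha/(1+\scalingexp_h)}-1|\,]$. This is where the explicit Pareto–spectral representation \eqref{eq:representation-nucond} enters: one computes the covariance functional of $\gamma^{-1}e_n^{(h)} - \gamma_h\gamma^{-2}e_n$ using \eqref{eq:tail-scaling-exponent} for the $V$-process and the marginal Pareto tail for the $|X|$-process, together with the asymptotic covariance between the two empirical processes, which reduces (after the deterministic time change making each a Brownian motion) to the covariance of the two indicator functionals evaluated against $\nucondspectral_h$ — concretely, to $\esp[\min(1,|W_h|^{\alpha/(1+\scalingexp_h)})]$-type quantities. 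Collapsing these gives the stated constant; the absolute value appears because the variance of $|W_h|^{\alpha/(1+\scalingexp_h)} - 1$ under the size-biased ``spectral'' normalization collapses to a first absolute moment, exactly as in the classical single-Hill variance $\gamma^2$.

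The main obstacle is the joint weak convergence of the two tail empirical processes and the correct identification of their cross-covariance: the variables $|X_j|$ and $|X_jX_{j+h}|$ are built from overlapping coordinates of the same series, so an exceedance of $\orderstat[|X|]{n}{n-k}$ by $|X_j|$ and an exceedance of $\orderstat[V]{n}{n-k}$ by $|X_jX_{j+h}|$ are strongly correlated at the same index $j$, and one must verify that conditions \eqref{eq:conditiondhs}, \eqref{eq:conditiondhs-product}, \eqref{eq:condition-slog-univ}, \eqref{eq:condition-slog-product} suffice to control all the remaining cross-lag clustering so that only the diagonal contributes to the limiting cross-covariance. Once the joint functional CLT is in hand, the delta method and the variance computation via \eqref{eq:representation-nucond} are routine.
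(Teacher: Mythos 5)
Your plan follows essentially the same route as the paper: joint functional convergence of the two tail empirical processes (for $|X_j|$ and for the products $|X_jX_{j+h}|$) obtained from the blocking/tail-array-sum machinery, Vervaat's lemma for the order statistics, the integral representation of the two Hill estimators with the log-moment conditions \eqref{eq:condition-slog-univ}--\eqref{eq:condition-slog-product} controlling the tail of the integral, the delta method for $\gamma_h/\gamma-1$, and the variance identified through the spectral representation \eqref{eq:representation-nucond}. You also correctly single out the cross-covariance of the two overlapping-coordinate processes as the main technical point, which is exactly where the paper's indexing by the union class $\mcg_0'\cup\mcg_1'$ does the work.
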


\section{Examples}
\label{sec:examples}

\subsection{Stochastic volatility process}
\label{sec:stoch-volat-proc}
Consider the sequence $X_j=\varepsilon_j\exp(Y_j)$, $j\in\Zset$, where $\sequence{Y}$ is a Gaussian
process independent of the \iid\ sequence $\sequence{\varepsilon}$, regularly varying with index
$\alpha$. For simplicity we assume that the random variables $\varepsilon_j$ are nonnegative.  We
list the properties of $X_j$ (see
\cite{davis:mikosch:2001},~\cite{kulik:soulier:2011},~\cite{kulik:soulier:2015}).
\begin{enumerate}[(i),wide=0pt]
\item The sequence $\sequence{X}$ is regularly varying with extremal independence. It satisfies
  Assumption~\ref{hypo:conditional-indep-measure} with $\scalfunccev_h\equiv 1$ for all $h\geq 1$.

\item By Breiman's lemma, $\pr(X_0>x)\sim \esp[\exp(\alpha Y_0)]\pr(\varepsilon_0>x)$ as
  $x\to\infty$.

\item By \cite[Theorem 5.2a),c)]{bradley:2005}, if the spectral density of the Gaussian sequence
  $\sequence{Y}$ is bounded away from zero and if $\cov(Y_0,Y_n)=O(n^{-\delta})$ with $\delta>2$
  then  $\beta(n)=O(n^{2-\delta})$;

\item Conditioning on the sequence ${\mathcal Y}=\sequence{Y}$, the equivalence between the tails of
  $\varepsilon_0$ and $X_0$ and Potter's bounds yield for $\delta>0$,
  \begin{align*}
    \frac{1}{\tail{F_0}(\tepseq)}
    & \sum_{\ell < |j| \leq \dhinterseq} \pr(X_0>\tepseq s,X_j>\tepseq s) \\
    & = \frac{\tail{F}_{\varepsilon}^2(\tepseq)}{\tail{F_0}(\tepseq)} \sum_{\ell<
      |j| \leq \dhinterseq} \esp \left[ \frac{\pr(\varepsilon_0>\tepseq s \exp(-Y_0) \mid\mcy)} {\pr(\varepsilon_0>\tepseq)}
      \frac{\pr(\varepsilon_0>\tepseq s \exp(-Y_j) \mid \mcy)}{\pr(\varepsilon_0>\tepseq)}  \right] \\
    & =  O(\tail{F}_{\varepsilon}(\tepseq)) \sum_{\ell<
      |j| \leq \dhinterseq} \esp\left[\exp((\alpha+\delta)(Y_0+Y_j))\vee 1\right]  = O(\dhinterseq\tail{F_0}(\tepseq)) = o(1) \; ,
  \end{align*}
  as $n\to\infty$ if~(\ref{eq:rnbarFun0}) holds.

\item Fix $\delta>0$.  We again condition on the sequence $\mcy$ and
  apply Potter's bounds:
\begin{align*}
  \frac{1}{\tail{F_0}(\tepseq)}
  & \sum_{\ell<|j| \leq \dhinterseq} \esp\left[|X_hX_{j+h}|\ind{\{X_0>\tepseq s\}}\ind{\{X_j>\tepseq s\}}\right] \\
  & = \frac{\tail{F}_{\varepsilon}^2(\tepseq)}{\tail{F_0}(\tepseq)} (\esp[|\varepsilon_0|])^2 \\
  & \phantom{=} \times \sum_{\ell<|j|\leq\dhinterseq}
    \esp\left[ \rme^{ Y_h}\frac{\pr(\varepsilon_0>\tepseq s \exp(-Y_0) \mid\mcy)}{\pr(\varepsilon_0>\tepseq)}
    \rme^{  Y_{j+h}}\frac{\pr(\varepsilon_0>\tepseq s \exp(-Y_j) \mid \mcy)}  {\pr(\varepsilon_0>\tepseq)}   \right]  \\
  & = O(\tail{F}_{\varepsilon}(\tepseq)) \sum_{\ell<|j|\leq \dhinterseq}
    \esp\left[\exp((Y_h+Y_{j+h}))\left\{\exp((\alpha+\delta)(Y_0+Y_j))\vee 1\right\}\right]\\
  & = O(\dhinterseq\tail{F}_{\varepsilon}(\tepseq))  = o(1) \; ,
 \end{align*}
 whenever (\ref{eq:rnbarFun0}) holds and $\esp\left[|\varepsilon_0|\right]<\infty$.

\end{enumerate}
In summary, the results in \Cref{sec:est-distrib} are applicable to the stochastic volatility model.

On the other hand, condition \eqref{eq:condition-epsilon-product-cev-better} does not hold and hence
the method of estimating the conditional scaling exponent is not applicable here (note however that
the exponent itself is zero).

\subsection{Markov chains}
As in \cite{kulik:soulier:wintenberger:2015}, assume that $\sequence{X}$ is a function of a
stationary Markov chain $\sequence{\mby}$, defined on a probability space $(\Omega,\mcf,\pr)$, with
values in a measurable space $(E,\mce)$. That is, there exists a measurable real valued function
$g:E\to\Rset$ such that $X_j = g(\mby_j)$. Assume moreover that:
\begin{hypothesis}
  \label{hypo:drift-small}
  \begin{enumerate}[(i),wide=0pt]
  \item The Markov chain $\{\mby_j,j\in\Zset\}$ is strictly stationary under $\pr$.
  \item The sequence $\{X_j=g(\mby_j),j\in\Zset\}$ is regularly varying with tail index $\alpha>0$.
  \item The sequence $\{X_j=g(\mby_j),j\in\Zset\}$ satisfies \Cref{hypo:conditional-indep-measure}.
  \item There exist a measurable function $V:E\to[1,\infty)$, $\gamma\in (0,1)$, $x_0\geq1$ and
    $b>0$ such that for all $y\in E$,
    \begin{align}
      \label{eq:drift}
      \esp[V(\mby_1) \mid \mby_0=y] \leq \gamma V(y)  + b\;.
    \end{align}
  \item There exist an integer $m\geq1$ and for all $x\geq x_0$, there exists a probability measure
    $\nu$ on $(E,\mce)$ and $\epsilon>0$ such that, for all $y \in\{V \leq x\}$ and all measurable
    sets $B\in\mce$,
    \begin{align*}
      \pr(\mby_m \in B \mid \mby_0=y) \geq \epsilon \nu(B) \; .
    \end{align*}
  \item There exist $q_0\in(0,\alpha)$ and a constant $c>0$ such that
    \begin{align*}
      |g|^{q_0} \leq c V \; .
    \end{align*}
  \item For every $s>0$,
    \begin{align}
      \limsup_{n\to\infty} \frac{1}{b^{q_0}(u_n)\tail{F}(u_n)} \esp\left[V(\mby_0) \ind{\{g(\mby_0)>u_n s\}}
      \right] < \infty \; ,  \label{eq:def-Q}
    \end{align}
    where $b(x)=b_1(x)$.
  \end{enumerate}
\end{hypothesis}
In \cite{kulik:soulier:wintenberger:2015} we showed that the above assumptions (without (iii) and
with $b(u_n)=u_n$ in \eqref{eq:def-Q}) imply that $\sequence{X}$ is $\beta$-mixing with geometric
rates and the conditions~\eqref{eq:beta},~\eqref{eq:conditiondhs} and
\eqref{eq:psi-second-moment-extrindep}-\eqref{eq:conditiondhs-ext-extrindep} are satisfied.
Following the calculations in \cite{kulik:soulier:wintenberger:2015} we can argue that
\eqref{eq:psi-second-moment-extrindep}-\eqref{eq:conditiondhs-ext-extrindep} hold with
$b_h(u_n)=o(u_n)$.  Therefore, we conclude the following result.
\begin{corollary}
  Assume that \Cref{hypo:drift-small} holds. Assume moreover that the conditions \eqref{eq:rn},
  \eqref{eq:rnbarFun0}, \eqref{eq:no-bias-orderstat-bivariate} are satisfied. Then the conclusion of
  \Cref{theo:tep-extrindep} holds. If also \eqref{eq:no-bias-sums} is satisfied, then the of
  \Cref{coro:tepindep-semifeasible} holds. If moreover $\Psi_h$ is differentiable, then the
  conclusion of \Cref{theo:clt-psi} holds.
\end{corollary}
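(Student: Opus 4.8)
The plan is to verify that \Cref{hypo:drift-small}, together with the extra requirements \eqref{eq:rn}, \eqref{eq:rnbarFun0} and \eqref{eq:no-bias-orderstat-bivariate} (and, for the subsequent claims, \eqref{eq:no-bias-sums} and differentiability of $\cevcdfuniv_h$), implies every standing hypothesis of \Cref{theo:tep-extrindep}, \Cref{coro:tepindep-semifeasible} and \Cref{theo:clt-psi}, and then simply to invoke those results. First I would record the structural facts: strict stationarity of $\{X_j=g(\mby_j)\}$ and its regular variation come from items (i)--(ii) of \Cref{hypo:drift-small}, while \Cref{hypo:conditional-indep-measure} is postulated in item (iii); since we are in the extremally independent regime, $\scalfunccev_h(x)=o(x)$ for all $h\ge1$.

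The core of the proof is the reduction to \cite{kulik:soulier:wintenberger:2015}. The drift inequality \eqref{eq:drift}, the minorization condition, the domination $|g|^{q_0}\le cV$ and the tail control \eqref{eq:def-Q} imply, as established there, that $\{X_j\}$ is $\beta$-mixing with geometric rate; combined with \eqref{eq:rn} this allows the usual choice of block sequences $\dhinterseq,\dhinterseqsmall$ (say $\dhinterseqsmall$ of logarithmic and $\dhinterseq$ of small polynomial order) for which \eqref{eq:beta} holds, so that, once \eqref{eq:rnbarFun0} is assumed, \Cref{hypo:basics} is in force. The same reference yields the anti-clustering condition \eqref{eq:conditiondhs}, i.e.\ \Cref{hypo:fclt-infeasible}, as well as the moment conditions \eqref{eq:psi-second-moment-extrindep}--\eqref{eq:conditiondhs-ext-extrindep}; I would re-run those computations with the genuine normalization $\scalfunccev_h(\tepseq)$ in place of $\tepseq$, using $\scalfunccev_h(\tepseq)=o(\tepseq)$ and the version of \eqref{eq:def-Q} involving $\scalfunccev^{q_0}(\tepseq)$, to obtain them in the form required by \Cref{hypo:moments}; the rate condition $\dhinterseq\{n\tail{F}_0(\tepseq)\}^{-\delta/2}\to0$ is then met by the polynomial choice of $\dhinterseq$, compatibly with geometric mixing.

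The remaining hypotheses are assumed outright: \eqref{eq:no-bias-orderstat-bivariate} is precisely \Cref{item:B2}, \eqref{eq:no-bias-sums} is the bias condition of \Cref{hypo:moments}, and the continuity of $\cevcdfuniv_h$ used in \Cref{theo:tep-extrindep} follows from the non-degeneracy part of \Cref{hypo:conditional-indep-measure} (and is automatic whenever differentiability is assumed). With all hypotheses in place, \Cref{theo:tep-extrindep} gives the first assertion; adjoining \eqref{eq:no-bias-sums} completes \Cref{hypo:moments}, so \Cref{coro:tepindep-semifeasible} gives the second; and when $\cevcdfuniv_h$ is differentiable, \Cref{theo:clt-psi}, obtained from \Cref{coro:tepindep-semifeasible} by the $\delta$-method, gives the third.

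The step I expect to be the main obstacle is showing that the anti-clustering and second-moment estimates of \cite{kulik:soulier:wintenberger:2015}, originally proved with $\scalfunccev(\tepseq)=\tepseq$, survive the replacement of $\tepseq$ by $\scalfunccev_h(\tepseq)$ in \eqref{eq:psi-second-moment-extrindep}--\eqref{eq:conditiondhs-ext-extrindep}. As in the stochastic volatility computation of \Cref{sec:stoch-volat-proc}, I would treat this by conditioning and iterating the drift inequality \eqref{eq:drift} to dominate quantities such as $\esp[V(\mby_h)V(\mby_{j+h})\ind{g(\mby_0)>\tepseq s}\ind{g(\mby_j)>\tepseq s}]$ and then appealing to \eqref{eq:def-Q}; the delicate bookkeeping is to keep the exponents consistent, since $q_0<\alpha$, the requirement $\alpha>2$ stemming from \eqref{eq:psi-second-moment-extrindep}, and the moment order $2+\delta$ must all be reconciled simultaneously.
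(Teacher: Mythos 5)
Your proposal follows the paper's own route exactly: the paper proves this corollary simply by citing \cite{kulik:soulier:wintenberger:2015} for the geometric $\beta$-mixing rate and for conditions \eqref{eq:beta}, \eqref{eq:conditiondhs} and \eqref{eq:psi-second-moment-extrindep}--\eqref{eq:conditiondhs-ext-extrindep} (adapted, as you do, to the normalization $b_h(u_n)=o(u_n)$), assuming the remaining rate and bias conditions outright and then invoking \Cref{theo:tep-extrindep}, \Cref{coro:tepindep-semifeasible} and \Cref{theo:clt-psi}. The only quibble is your claim that continuity of $\cevcdfuniv_h$ follows from the non-degeneracy part of \Cref{hypo:conditional-indep-measure}: the paper only guarantees continuity of the law of $Y_0W_h$ away from $0$, and continuity at $0$ additionally requires $\pr(W_h=0)=0$ --- a point the corollary itself leaves implicit, so this does not set you apart from the paper's argument.
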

\begin{example}[Exponential AR(1)]
  Consider $X_{j}=e^{\xi_j}$, $\xi_j=\phi \xi_{j-1}+\varepsilon_j$, where $\phi\in (0,1)$ and
  $\pr(e^{\varepsilon_0}>x)=x^{-\alpha}L(x)$. Then the stationary solution has a regularly varying
  right tail and is tail equivalent to $e^{\varepsilon_0}$,
  cf. \cite{mikosch:rezapour:2013},~\cite{kulik:soulier:2015}. If $\alpha>1$, then
  $\esp[X_1\mid X_0=y]=y^\phi \esp[e^{\varepsilon_0}]$. Hence, the drift function is
  $V(y)=y^\phi$. Condition \eqref{eq:def-Q} holds with $q_0=\phi<\alpha$.
\end{example}

\section{Simulations}\label{sec:simulations}
We simulated from Exponential AR(1) model $X_j=e^{\xi_j}$,
$j=1,\ldots,500$, where $\xi_j=\phi \xi_{j-1}+\epsilon_j$, and
$\epsilon_j$ are i.i.d. with exponential distribution and the
parameter $\alpha$. Hence, $\kappa_1=\phi$, $\kappa_2=\phi^2$, $\kappa_3=\phi^3$.

On Figure 1 we plot estimates of the tail index of $X$ using the Hill estimator along with the confidence intervals:
$$
\widehat\alpha_k\pm 1.96 \frac{1}{\sqrt{k}} \widehat\alpha_k\;, \ \ k=10,\ldots,500\;,
$$
where $\widehat\alpha_k$ is the reciprocal of the Hill estimator based on $k$ order statistics. On
the same graph we plot the estimates of the tail index for products, along with the confidence
intervals (left panel). On the right panels we display estimates of the scaling exponent $\kappa_1$
along with the confidence interval:
$$
\widehat\kappa_1(k)\pm  1.96\frac{1}{\sqrt{k}} (1+\widehat\kappa_1(k))\times \sqrt{{\rm E}[|W_1^{\alpha/(1+\kappa)}-1|]}\;,
$$
where $\widehat\kappa_1(k)$ indicates that the estimator of the scaling exponent is based on $k$
order statistics. The factor $\sqrt{{\rm E}[|W_1^{\alpha/(1+\kappa_1)}-1|]}$ is computed in two
ways. First, note that for our EXPAR(1) we have $W_1={\rm e}^{\epsilon_0}$. Thus, $W_1$ is
exponential with rate $\alpha$. In the first case we plug in known values of $\alpha$ and
$\kappa_1=\phi$ into the expectation and evaluate the multiplicative factor by Monte Carlo
simulation. In the second case, we make the factor depending on $k$, plug-in the estimates
$\widehat\alpha_k$ and $\widehat\kappa_1(k)$ into the expectation and performing Monte Carlo for
each $k$. The first set of confidence intervals is marked in blue, while the second one is plotted
in red.

\begin{figure}
\begin{center}
  \includegraphics[width=0.9\textwidth,height=0.35\textheight]{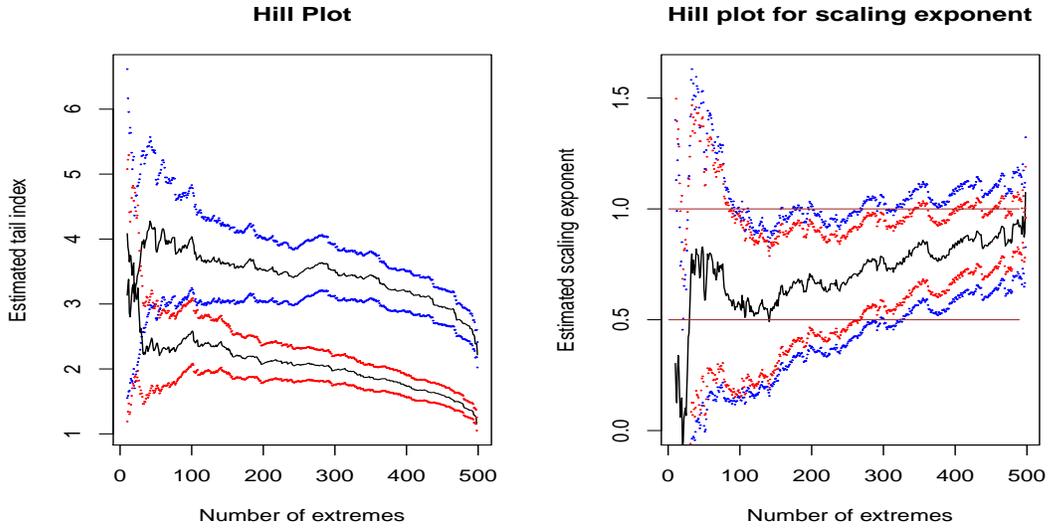}
\end{center}
  \label{fig:expar-sim-2}
  \caption{Exponential AR($1$) model with $\phi=0.5$ and $\alpha=2$. Left panel - tail index
    estimation for the original data and products. Right panel: estimation of $\kappa_1$ along with
    two types of confidence intervals. The horizontal lines indicate the true values and 1, the
    latter indicates extremal dependence.}
\end{figure}
Figure 2 displays boxplots for the estimates of the scaling exponent obtained from 1000 Monte Carlo
simulations, for selected choices of the number of order statistics.

\begin{figure}
\begin{center}
  \includegraphics[width=0.9\textwidth,height=0.3\textheight]{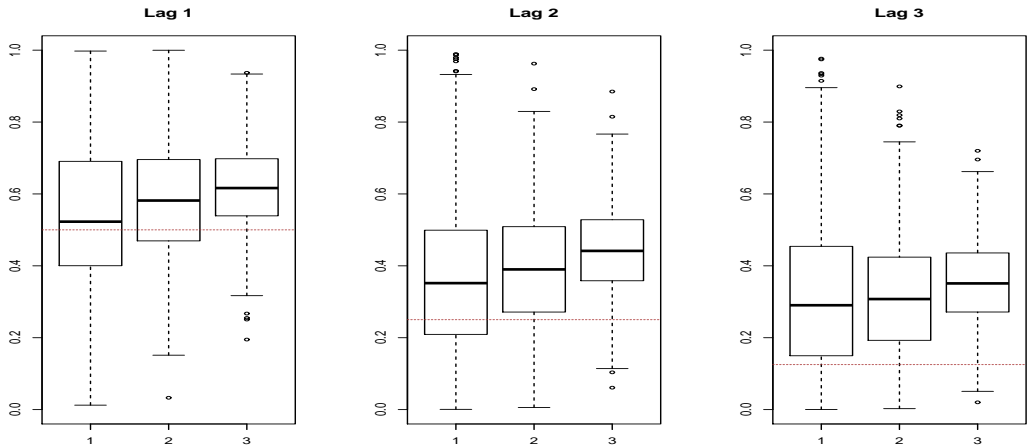}
\end{center}
  \label{fig:exparbox-sim-1}
  \caption{ Exponential AR($1$) model with $\phi=0.5$, $\alpha=4$ and sample size
    $n=500$. Estimation of $\kappa_1$ (left panel), $\kappa_2$ (middle panel) and $\kappa_3$ (right
    panel) for $k=0.05n$ (left box), $k=0.1n$ (middle box), $k=0.2n$ (right box), based on 1000
    repetitions.  }
\end{figure}
\section{Data Analysis}
\label{sec:data}
In this section we apply our theory to the volumes of sales of Microsoft stock prices from January
1, 2010. The data has been detrended by applying simple linear regression.  There is some
correlation in data and the absolute values of residuals. The estimated tail index for residuals is
around 2, while for the products at lag 1, around 1.3-1.4. This indicates extremal independence,
since under extremal dependence we would expect the tail of the product to be 1. The estimate of the
scaling exponent returns 0.6, with the upper confidence interval clearly separated from 1.  The
confidence intervals were calculated under the assumption that the underlying process is EXPAR(1),
described in \Cref{sec:simulations}.

\begin{figure}
\begin{center}
\includegraphics[width=0.45\textwidth,height=0.25\textheight]{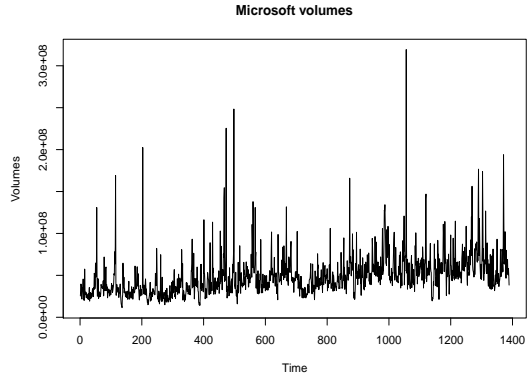}
\end{center}
\caption{Microsoft volumes data}
\end{figure}

\begin{figure}
\begin{center}
\includegraphics[width=0.65\textwidth,height=0.25\textheight]{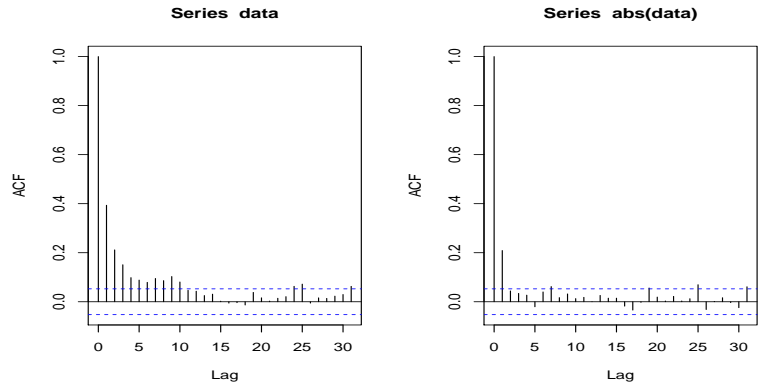}
\end{center}
\caption{Microsoft volumes data: correlations}
\end{figure}

\begin{figure}
\begin{center}
\includegraphics[width=0.65\textwidth,height=0.25\textheight]{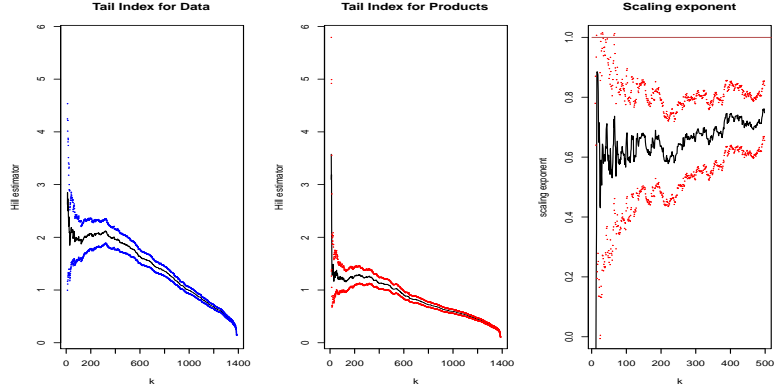}
\end{center}
\caption{Microsoft volumes data: estimation of the tail index (left panel), tail index for products
  (mid-panel) and the conditional scaling exponent for lag 1}
\end{figure}

\section{Proofs}
In this section we prove our results. In~\Cref{sec:tail-array-sums} we prove general results on weak
convergence of tail array sums; see~\Cref{lem:clt-array-fidi,thm:fclt-array}.  Many details are
skipped, since the arguments follow basically the lines of the proofs in
\cite{kulik:soulier:wintenberger:2015}, appropriately modified to incorporate the CEV
assumption. These results will be applied to prove all the results of \Cref{sec:fclt-unfeasible}.

\subsection{Convergence of tail arrays sums}\label{sec:tail-array-sums}
For $0\leq i_1\leq i_2\in\Nset$, $0\leq j_1\leq j_2\in\Nset$ such that $i_2-i_1=j_2-j_1$ denote
\begin{align*}
  \vbX_{i_1,i_2}=(X_{i_1},\ldots,X_{i_2})\;, \ \ \vbX_{i_1,i_2}/{\bf b}_{j_1,j_2}(\tepseq)=(X_{i_1}/b_{j_1}(u_n),\ldots,X_{i_2}/b_{j_2}(u_n))\;.
\end{align*}
Let $\psi:\Rset^{h+1}\to\Rset_+$ be a measurable function such that
\begin{align}
  \label{eq:negligibility-psi}
  &  \lim_{\epsilon\to0} \limsup_{x\to\infty}\frac{\esp\left[|\psi\left(\frac{X_0}{x},\frac{X_1}{b_1(x)},\ldots,
    \frac{X_h}{b_h(x)}\right)|\ind{|X_0|\leq \epsilon x}\right]}{\tail{F}_0(x)} = 0 \; ,
\end{align}
and either $\psi$ is bounded or
there exists $\delta>0$ such that
\begin{align}
    \label{eq:psi-second-moment-1-extrindep}
    \sup_{n\geq1} & \frac{ \esp\left[ \psi^{2+\delta}\left(\frac{X_0}{\tepseq},\frac{X_1}{b_1(\tepseq)},\ldots,
    \frac{X_h}{b_h(\tepseq)}\right)\right]}{\tail{F}_0(\tepseq)} < \infty \; .
  \end{align}
Furthermore, we need a version of the anticlustering condition:
\begin{align}
  \label{eq:condition-sum-phi}
  &  \lim_{\ell\to\infty} \limsup_{n\to\infty} \frac{1}{\tail{F}_0(\tepseq)} \sum_{\ell<|j|\leq\dhinterseq}
    \esp\left[\psi\left(\frac{\vbX_{0,h}}{{\bf b}_{0,h}(\tepseq)}\right)\psi\left(\frac{\vbX_{j,j+h}}{{\bf b}_{0,h}(\tepseq)}\right)\right] = 0 \; ,
\end{align}
where $r_n$ is the sequence from~\ref{item:assumptions-on-sequences} of~\Cref{hypo:basics}.

\begin{definition}
By
$\funcextrindep_\psi$ we denote the linear space of bounded functions $\phi:\Rset^{h+1}\to\Rset$ such that:
\begin{itemize}
\item
 $|\phi| \leq \constant\; \psi$ where $\constant$ depends on $\phi$;
\item
for all $j\geq0$, the function $\vbx_{0,j+h}\mapsto\phi(\vbx_{j,j+h})$ is almost surely
  continuous with respect to $\numultcond{0,j+h}$.
\end{itemize}
\end{definition}
In this section we are interested in convergence of the tail array sums of the form
\begin{align*}
  \tedextrindep_{h,n}(\phi) = \frac{1}{n\tail{F}_0(\tepseq)}
  \sum_{j=1}^{n-h}\phi\left(\frac{X_j}{\tepseq},\frac{X_{j+1}}{b_1(\tepseq)},\ldots,\frac{X_{j+h}}{b_h(\tepseq)}\right)\;.
\end{align*}
We consider finite dimensional convergence of the process
\begin{align*} \widetilde{\tepextrindep}_{h,n}(\phi)=\sqrt{n\tail{F}_0(\tepseq)}\left\{\tedextrindep_{h,n}(\phi)-\esp[\tedextrindep_{h,n}(\phi)]\right\}
    \index{$\tepextrindep_{h,n}$}
\end{align*}
indexed by the set $\funcextrindep_{\psi}$.
\begin{theorem}
  \label{lem:clt-array-fidi}
  Let $\sequence{X}$ be a strictly stationary regularly varying sequence such
  that~\Cref{hypo:conditional-indep-measure} holds with extremal independence at all lags
  and that~\Cref{hypo:basics} is satisfied.
  Let $\psi$ be a measurable function such that  \eqref{eq:negligibility-psi},~\eqref{eq:condition-sum-phi} hold and either $\psi$ is bounded or
  there exists
  $\delta\in (0,1]$ such that~\eqref{eq:psi-second-moment-1-extrindep} and
  \begin{align}
    \lim_{n\to\infty} \frac{\dhinterseq }{\left(n\tail{F}_0(\tepseq)\right)^{{\delta/2}}} = 0 \;
     \label{eq:rate-condition-fidi-unbounded}
  \end{align}
  hold. Then
  \begin{align*}
\left\{\widetilde\tepextrindep_{h,n}(\phi)=\sqrt{n\tail{F}_0(\tepseq)}\left\{\tedextrindep_{h,n}(\phi)-\esp[\tedextrindep_{h,n}(\phi)]\right\},\phi\in\funcextrindep_{\psi}\right\}\convfidi
    \left\{\tepextrindep_h(\phi),\phi\in\funcextrindep_{\psi}\right\}\;,
  \end{align*}
  where $\tepextrindep_h$ is a Gaussian process indexed by $\funcextrindep_{\psi}$ with covariance function
  $(\phi,\varphi)\mapsto\nucond_{0,h}(\phi\varphi)$.
\end{theorem}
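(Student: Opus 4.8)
The plan is to establish finite-dimensional convergence via the blocking method combined with a central limit theorem for triangular arrays of row-wise independent random variables. First I would reduce to a single linear combination: by the Cramér–Wold device it suffices to prove asymptotic normality of $\sum_{i=1}^m a_i \widetilde\tepextrindep_{h,n}(\phi_i)$ for arbitrary $\phi_1,\dots,\phi_m \in \funcextrindep_\psi$ and $a_1,\dots,a_m \in \Rset$; since $\funcextrindep_\psi$ is a linear space and any finite linear combination $\phi = \sum a_i \phi_i$ again satisfies $|\phi| \leq \constant\,\psi$ and the almost-sure continuity requirement, this is equivalent to proving a one-dimensional CLT for $\widetilde\tepextrindep_{h,n}(\phi)$ with $\phi \in \funcextrindep_\psi$, after which the covariance structure is identified by polarization. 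So the crux is: $\widetilde\tepextrindep_{h,n}(\phi) \convdistr \gauss(0,\nucond_{0,h}(\phi^2))$.

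Next I would partition $\{1,\dots,n-h\}$ into roughly $m_n = n/\dhinterseq$ consecutive big blocks of length $\dhinterseq$, each followed by a small block of length $\dhinterseqsmall$ (a ``big-block/small-block'' decomposition). Writing $Z_{n,j}$ for the summand $\phi(X_j/\tepseq,\dots,X_{j+h}/b_h(\tepseq))$, the contribution of the small blocks is negligible: there are $\approx n/\dhinterseq$ of them each contributing $O(\dhinterseqsmall)$ terms, and since $\dhinterseqsmall/\dhinterseq \to 0$ by~\eqref{eq:rn} together with the moment control from $\psi$, the small-block sum divided by $\sqrt{n\tail{F}_0(\tepseq)}$ vanishes in probability. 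The $\beta$-mixing condition~\eqref{eq:beta}, via the standard coupling lemma (Berbee/Eberlein), lets me replace the big-block sums by an i.i.d.\ sequence of block-sums at a total cost $m_n \beta_{\dhinterseqsmall} = (n/\dhinterseq)\beta_{\dhinterseqsmall} \to 0$, so the limiting distribution is unchanged. It then remains to apply the Lindeberg–Feller CLT to the centered, normalized block sums.

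The two ingredients for Lindeberg–Feller are (i) convergence of the variance and (ii) the Lindeberg (or Lyapunov) negligibility condition. For (i), the variance of one normalized block sum is $\frac{1}{n\tail F_0(\tepseq)}\,\var\bigl(\sum_{j \text{ in a block}} Z_{n,j}\bigr)$; multiplying by $m_n$ blocks, the diagonal terms give $\frac{1}{\tail F_0(\tepseq)}\esp[Z_{n,0}^2] \to \nucond_{0,h}(\phi^2)$ by Assumption~\ref{hypo:conditional-indep-measure}, the homogeneity~\eqref{eq:homogeneity-cev}, the negligibility~\eqref{eq:negligibility-psi}, and (when $\psi$ is unbounded) uniform integrability from~\eqref{eq:psi-second-moment-1-extrindep}; the off-diagonal terms within blocks are controlled by the anticlustering condition~\eqref{eq:condition-sum-phi}, which forces the $\limsup$ over $n$, after sending the block-internal lag cutoff $\ell \to \infty$, to be arbitrarily small (a finite number of ``near-diagonal'' lags $|j| \leq \ell$ contribute $O(\ell \tail F_0(\tepseq)) \to 0$ after normalization). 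For (ii), when $\psi$ is bounded, each block sum is $O(\dhinterseq)$ and the normalization $\sqrt{n\tail F_0(\tepseq)}$ together with $\dhinterseq = o(n\tail F_0(\tepseq))$ — which follows from~\eqref{eq:beta} since $\dhinterseq/(n\tail F_0(\tepseq))$ would otherwise contradict the mixing rate, or more directly is part of the standard blocking setup — gives negligibility; when $\psi$ is unbounded I would instead verify a Lyapunov condition of order $2+\delta$, bounding the $(2+\delta)$-th moment of a block sum using~\eqref{eq:psi-second-moment-1-extrindep} and the rate condition~\eqref{eq:rate-condition-fidi-unbounded}, i.e.\ $\dhinterseq = o\bigl((n\tail F_0(\tepseq))^{\delta/2}\bigr)$, which is precisely what makes the normalized $(2+\delta)$-th moment tend to zero.

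The main obstacle is the variance computation, specifically showing that the intrablock covariances do not accumulate. This is exactly the role of~\eqref{eq:condition-sum-phi}, but it must be applied with care: one splits the block-internal sum at a fixed lag $\ell$, uses stationarity and the Cauchy–Schwarz inequality $\esp[|Z_{n,0}Z_{n,j}|] \leq (\esp[Z_{n,0}^2]\esp[Z_{n,j}^2])^{1/2}$ on the finitely many terms with $|j| \leq \ell$ to see they are $O(\ell\tail F_0(\tepseq))$, and then invokes~\eqref{eq:condition-sum-phi} on the remaining terms with $\ell < |j| \leq \dhinterseq$, letting $n\to\infty$ first and $\ell\to\infty$ second. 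Everything else is bookkeeping that mirrors \cite{kulik:soulier:wintenberger:2015}, adapted so that the CEV normalizations $b_1(\tepseq),\dots,b_h(\tepseq)$ replace the single scaling $\tepseq$ inside $\psi$; the homogeneity relation~\eqref{eq:homogeneity-cev} is what guarantees that these modified expectations still converge to the expected limiting functionals of $\nucond_{0,h}$.
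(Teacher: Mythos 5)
Your overall architecture is the same as the paper's: reduce to independent blocks via $\beta$-mixing and the rate condition \eqref{eq:beta}, compute the limiting variance of a block sum, and conclude with a Lindeberg--Feller / Lyapunov argument, using \eqref{eq:psi-second-moment-1-extrindep} together with \eqref{eq:rate-condition-fidi-unbounded} in the unbounded case. However, there is a genuine gap in your variance computation, precisely at the step you yourself flag as the main obstacle.

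You dispose of the cross terms at fixed lags $1\leq |j|\leq \ell$ by Cauchy--Schwarz, claiming they contribute $O(\ell\,\tail{F}_0(\tepseq))$ and are therefore negligible ``after normalization.'' This does not work. Cauchy--Schwarz gives $\esp[|V_{0,n}(\phi)V_{j,n}(\varphi)|]\leq \esp[V_{0,n}^2(\psi)]=O(\tail{F}_0(\tepseq))$ for each fixed $j$, which is of exactly the same order as the diagonal term $\esp[V_{0,n}^2(\phi)]$. The normalized block variance is
\begin{align*}
  \frac{\esp[S_n(\phi)S_n(\varphi)]}{\dhinterseq\,\tail{F}_0(\tepseq)}
  \approx \sum_{|j|\leq\ell}\frac{\esp[V_{0,n}(\phi)V_{j,n}(\varphi)]}{\tail{F}_0(\tepseq)}
  + O\!\left(\sum_{\ell<|j|\leq\dhinterseq}\frac{\esp[V_{0,n}(\psi)V_{j,n}(\psi)]}{\tail{F}_0(\tepseq)}\right),
\end{align*}
so your bound leaves the first sum as $O(\ell)$, not $o(1)$: the near-diagonal lags contribute at the same order as the limit itself. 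What is actually needed, and what the paper proves in the second part of \Cref{lem:mucondphi-existence}, is that for each fixed $j\neq0$ the single term $\esp[V_{0,n}(\phi)V_{j,n}(\varphi)]/\tail{F}_0(\tepseq)\to0$. That step is where the hypothesis of extremal independence at all lags enters: one splits on $\{|X_0|\wedge|X_j|>\epsilon\tepseq\}$ versus its complement, kills the first piece by extremal independence of $(X_0,X_j)$ and the second by the negligibility condition \eqref{eq:negligibility-psi}. Your proposal never invokes extremal independence in the variance computation, which is a red flag: for an extremally dependent series the same blocking argument goes through but the limiting covariance is a full series $\sum_{j}\lim_n\esp[V_{0,n}(\phi)V_{j,n}(\varphi)]/\tail{F}_0(\tepseq)$ rather than the single-lag quantity $\nucond_{0,h}(\phi\varphi)$. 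With this lemma inserted (and the minor bookkeeping step that $\tail{F}_0(\tepseq)^{-1}\sum_{j\le\dhinterseq}\esp[V_{0,n}(\phi)]\esp[V_{j,n}(\varphi)]\to0$ because $\dhinterseq\tail{F}_0(\tepseq)\to0$, needed to pass between second moments and covariances), the rest of your argument is sound and matches the paper's.
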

In~\Cref{lem:mucondphi-existence} we will justify that under~\eqref{eq:negligibility-psi} the limit
\begin{align*}
  \nucond_{0,h}(\phi) = \lim_{n\to\infty}\esp[\tedextrindep_{h,n}(\phi)]
\end{align*}
is finite for $\phi\in\funcextrindep_{\psi}$. This allows us to consider weak convergence of the process
\begin{align*}
  \tepextrindep_{h,n}(\phi)=\sqrt{n\tail{F}_0(\tepseq)}\left\{\tedextrindep_{h,n}(\phi)-\nucond_{0,h}(\phi)\right\}
\end{align*}
indexed by a subset of $\funcextrindep_{\psi}$. Let $\mcg\subseteq \funcextrindep_{\psi}$ be  equipped  with a
semi-metric $\rho_h$.  The following result mimics Theorem 2.4
in~\cite{kulik:soulier:wintenberger:2015} which in turn is an adaptation
of~\cite[Theorem~2.11.1]{vandervaart:wellner:1996}. Hence, it is stated without a proof.

\begin{theorem}
  \label{thm:fclt-array}
  Let $\sequence{X}$ be a strictly stationary regularly varying sequence such
  that~\Cref{hypo:conditional-indep-measure} holds with extremal independence at all lags. Suppose
  that assumptions of \Cref{lem:clt-array-fidi} are satisfied.
  If moreover
  \begin{enumerate}[(i)]
  \item\label{item:pointwise-separable} $\mcg$ is pointwise separable;
  \item \label{item:envelope-our-entropy} the envelope function
    $\Phi_\mcg=\sup_{\phi\in\mcg} |\phi|$ is in $\funcextrindep_{\psi}$;
  \item \label{item:VC-subgraph} $\mcg$ is a  VC subgraph class or a finite union of such classes;
  \item \label{item:totally-bounded}$(\mcg,\rho_h)$ is totally bounded;
  \item \label{item:continuity} for every sequence $\{\delta_n\}$ which decreases to zero,
    \begin{align}
      \label{eq:continuite-pour-notre-theorem}
      \limsup_{n\to\infty} \sup_{\phi,\varphi\in\mcg \atop \rho_h(\phi,\varphi)\leq \delta_n}
      \frac{\esp\left[\left\{\phi\left(\frac{\vbX_{0,h}}{{\bf b}_{0,h}(\tepseq)}\right)-
      \varphi\left(\frac{\vbX_{0,h}}{{\bf b}_{0,h}(\tepseq)}\right)\right\}^2\right]}{\tail{F}_0(u_n)} = 0  \; ,
    \end{align}
  \end{enumerate}
  then
  \begin{align*}
    \widetilde\tepextrindep_{h,n}(\phi)=\sqrt{n\tail{F}_0(\tepseq)}\left\{\tedextrindep_{h,n}(\phi)-\esp[\tedextrindep_{h,n}(\phi)]\right\}\Rightarrow \tepextrindep_h(\phi)
  \end{align*}
  in $\ell^\infty(\mcg)$. If moreover
  \begin{align}
    \label{eq:second-order-psi}
    \lim_{n\to\infty}\sup_{\phi\in\mcg}\sqrt{n\tail{F}_0(\tepseq)}|\esp[\tedextrindep_{h,n}(\phi)]-\nucond_{0,h}(\phi)|=0\;,
  \end{align}
  then
  \begin{align*}
    \tepextrindep_{h,n}(\phi)=\sqrt{n\tail{F}_0(\tepseq)}\left\{\tedextrindep_{h,n}(\phi)-\nucond_{0,h}(\phi)\right\}\Rightarrow \tepextrindep_h(\phi)
  \end{align*}
  in $\ell^\infty(\mcg)$.
\end{theorem}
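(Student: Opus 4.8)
The plan is to derive the result from the general theory of weak convergence of empirical processes indexed by a class of functions that may change with $n$, combined with the big-block/small-block decomposition, following \cite[Theorem~2.4]{kulik:soulier:wintenberger:2015} and \cite[Theorem~2.11.1]{vandervaart:wellner:1996} almost verbatim; the only modification is that $X_j/\tepseq$ is everywhere replaced by the CEV-normalized vector $\vbX_{j,j+h}/{\bf b}_{0,h}(\tepseq)$. Weak convergence of $\widetilde\tepextrindep_{h,n}$ in $\ell^\infty(\mcg)$ is equivalent to (a) convergence of its finite dimensional distributions to those of $\tepextrindep_h$ and (b) asymptotic tightness; since $(\mcg,\rho_h)$ is totally bounded and $\mcg$ is pointwise separable, (b) reduces to asymptotic $\rho_h$-equicontinuity of $\widetilde\tepextrindep_{h,n}$ in probability. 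Statement (a) is exactly \Cref{lem:clt-array-fidi} restricted to finite subsets of $\mcg\subseteq\funcextrindep_\psi$, and~\eqref{eq:continuite-pour-notre-theorem} guarantees that the Gaussian limit $\tepextrindep_h$, which has covariance $\nucond_{0,h}(\phi\varphi)$, admits a version with $\rho_h$-uniformly continuous sample paths on the totally bounded set $(\mcg,\rho_h)$ and is therefore a tight Borel element of $\ell^\infty(\mcg)$.

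For the equicontinuity I would first apply the blocking argument: partition $\{1,\dots,n-h\}$ into roughly $n/\dhinterseq$ big blocks of length $\dhinterseq$, each followed by a small block of length $\dhinterseqsmall$ which is discarded. By Berbee's coupling lemma and~\eqref{eq:beta}, the contributions of the big blocks can be replaced, uniformly over $\mcg$, by those of independent copies at a total cost that is $o_{\pr}(1)$, the uniformity over $\mcg$ being afforded by the measurable envelope $\Phi_{\mcg}\in\funcextrindep_\psi$. One is then left with a normalized sum of \iid\ centered block functionals, to which the symmetrization and chaining machinery of \cite[Theorem~2.11.1]{vandervaart:wellner:1996} applies: the VC-subgraph hypothesis yields a polynomial uniform covering-number bound for $\mcg$; the envelope hypothesis controls the envelope in $L^2$, and via~\eqref{eq:psi-second-moment-1-extrindep} in $L^{2+\delta}$ when $\psi$ is unbounded; the anticlustering bound~\eqref{eq:condition-sum-phi} controls the intrablock variance, so that the relevant random semi-metric converges to a deterministic one; and~\eqref{eq:continuite-pour-notre-theorem} supplies the modulus-of-continuity control that lets the chaining radius shrink to zero. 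When $\psi$ is unbounded one truncates each $\phi$ at a level $\tau_n\to\infty$, bounds the discarded part using~\eqref{eq:psi-second-moment-1-extrindep}, and invokes~\eqref{eq:rate-condition-fidi-unbounded} so that the number of blocks $\dhinterseq$ is negligible against $(n\tail{F}_0(\tepseq))^{\delta/2}$ and the truncation error vanishes after normalization. This gives $\widetilde\tepextrindep_{h,n}\Rightarrow\tepextrindep_h$ in $\ell^\infty(\mcg)$.

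The second assertion is then immediate: under~\eqref{eq:second-order-psi} the deterministic shift $\sqrt{n\tail{F}_0(\tepseq)}\,\{\esp[\tedextrindep_{h,n}(\phi)]-\nucond_{0,h}(\phi)\}$ tends to zero uniformly over $\mcg$, so $\tepextrindep_{h,n}$ and $\widetilde\tepextrindep_{h,n}$ differ by a term that converges to zero in $\ell^\infty(\mcg)$ and hence share the same weak limit $\tepextrindep_h$.

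I expect the main obstacle to be the equicontinuity estimate in the unbounded case: one has to choose the truncation level $\tau_n$ and combine~\eqref{eq:psi-second-moment-1-extrindep},~\eqref{eq:rate-condition-fidi-unbounded} and the anticlustering bound~\eqref{eq:condition-sum-phi} so as to control, simultaneously, the truncation bias and the variance of the chaining increments, while the index class $\mcg$, and with it the class of block functionals, depends on $n$; keeping the covering-number and entropy constants uniform in $n$ under the CEV normalization ${\bf b}_{0,h}(\tepseq)$ is the delicate point. Since the argument is a line-by-line adaptation of \cite[Theorem~2.4]{kulik:soulier:wintenberger:2015}, the details are omitted.
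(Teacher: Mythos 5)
Your proposal follows exactly the route the paper itself takes: the authors state \Cref{thm:fclt-array} without proof, noting only that it mimics Theorem~2.4 of \cite{kulik:soulier:wintenberger:2015}, itself an adaptation of \cite[Theorem~2.11.1]{vandervaart:wellner:1996}, and your sketch (fidi convergence from \Cref{lem:clt-array-fidi}, tightness via blocking, Berbee coupling, VC entropy, truncation under \eqref{eq:psi-second-moment-1-extrindep} and \eqref{eq:rate-condition-fidi-unbounded}, and the trivial reduction of the second assertion to the first via \eqref{eq:second-order-psi}) is precisely the intended argument. Your outline is correct and, if anything, more explicit than the paper's.
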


The proof of \Cref{lem:clt-array-fidi} will be prefaced by several lemmas.  For brevity, write
\begin{align*}
  V_{j,n}(\phi) & = \phi\left(\frac{X_j}{\tepseq},\frac{X_{j+1}}{b_1(\tepseq)},\ldots,\frac{X_{j+h}}{b_h(\tepseq)}\right) \; , \ \
                  S_n(\phi)  = \sum_{j=1}^{r_n}V_{j,n}(\phi) \; .
\end{align*}

\begin{lemma}
  \label{lem:mucondphi-existence}
  Let~\Cref{hypo:conditional-indep-measure} hold with extremal independence at all lags.  Let $\psi$
  be a measurable function such that~\eqref{eq:negligibility-psi} holds and either $\psi$ is bounded
  or \eqref{eq:psi-second-moment-1-extrindep} holds.  Then, for all $\phi\in\funcextrindep_\psi$ we
  have $\nucond_{0,h}(\phi^2)<\infty$ and
  \begin{align}
    \label{eq:mucondh-phi-limit}
    \lim_{n\to\infty} \frac{\esp[V_{0,n}^2(\phi)] } {\pr(|X_0|>\tepseq)}
    = \numultcond{0,h}(\phi^2) =\int_0^\infty \esp[\phi^2(sW_0,s^{\scalingexp_{1}}W_1,\ldots,s^{\scalingexp_h}W_h)] \alpha s^{-\alpha-1} \rmd s\; .
  \end{align}
  Moreover, for all $j\not=0$ and $\phi,\varphi\in \funcextrindep_{\psi}$,
\begin{align*}
  \lim_{n\to\infty}\frac{1}{\tail{F}_0(\tepseq)}\esp[V_{0,n}(\phi)V_{j,n}(\varphi)] = 0 \; .
\end{align*}
\end{lemma}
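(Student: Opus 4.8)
The plan is to prove the two assertions of \Cref{lem:mucondphi-existence} in turn, the first about the rescaled second moment of a single term and the second about the asymptotic negligibility of cross terms at nonzero lags. For the first assertion, the natural route is to reduce everything to the vague convergence in \Cref{hypo:conditional-indep-measure}, i.e.~the convergence of $\pr(|X_0|>\tepseq)^{-1}\pr((X_0/\tepseq,X_1/b_1(\tepseq),\dots,X_h/b_h(\tepseq))\in\cdot)$ to $\nucond_{0,h}$ on $(\Rset\setminus\{0\})\times\Rset^h$. Writing $g=\phi^2$, I would first note that, because $\phi\in\funcextrindep_\psi$, $g$ is bounded, dominated by $\constant^2\psi^2$, and almost surely continuous with respect to $\nucond_{0,h}$, so the only obstruction to passing to the limit directly is that the bounded sets for the metric $d_h^0$ are sets separated from zero in the zeroth coordinate: mass carried on $\{|x_0|\leq\epsilon\}$ is not controlled by vague convergence. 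This is exactly what \eqref{eq:negligibility-psi} takes care of. The argument is therefore a standard truncation: split $\esp[V_{0,n}^2(\phi)]/\tail F_0(\tepseq)$ according to $|X_0|>\epsilon\tepseq$ or $|X_0|\leq\epsilon\tepseq$; on the first piece use vague convergence (the function $g\cdot\1{|x_0|>\epsilon}$ has support bounded away from zero and is a.e.~continuous w.r.t.~$\nucond_{0,h}$, up to a further smoothing/bounding argument when $\psi$ is unbounded so that the integrand is genuinely bounded with bounded support) to get convergence to $\nucond_{0,h}(g\,\1{|x_0|>\epsilon})$; on the second piece invoke \eqref{eq:negligibility-psi} (using $g\leq\constant^2\psi^2$, and that $\psi^2$ satisfies a negligibility bound of the same form — when $\psi$ is unbounded this follows from \eqref{eq:psi-second-moment-1-extrindep} together with \eqref{eq:negligibility-psi} via Hölder's inequality) to send the $\limsup$ to $0$ as $\epsilon\to0$. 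Letting $\epsilon\to0$ on the limiting side uses monotone convergence to conclude $\nucond_{0,h}(g)<\infty$ and that the limit equals $\nucond_{0,h}(\phi^2)$; the final representation as $\int_0^\infty\esp[\phi^2(sW_0,\dots,s^{\scalingexp_h}W_h)]\alpha s^{-\alpha-1}\rmd s$ is just \eqref{eq:representation-nucond} applied with $A$ replaced by integration against $\phi^2$, i.e.~the polar/homogeneous representation of $\nucond_{0,h}$.

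For the second assertion — negligibility of $\tail F_0(\tepseq)^{-1}\esp[V_{0,n}(\phi)V_{j,n}(\varphi)]$ for each fixed $j\neq0$ — the key point is that extremal independence at all lags forces the measure of the relevant region to vanish in the limit. Here $V_{0,n}(\phi)$ involves the block $(X_0,\dots,X_h)$ and $V_{j,n}(\varphi)$ involves $(X_j,\dots,X_{j+h})$, and these two blocks, for $j\geq1$ say, together span at most coordinates $0$ through $j+h$, so the relevant joint law is governed by the conditional vague convergence at lag $j+h$. By $|\phi|\leq\constant\psi$, $|\varphi|\leq\constant\psi$ I would bound $|V_{0,n}(\phi)V_{j,n}(\varphi)|$ by $\constant^2\psi(\vbX_{0,h}/{\bf b}_{0,h}(\tepseq))\psi(\vbX_{j,j+h}/{\bf b}_{0,h}(\tepseq))$ and then run the same $\epsilon$-truncation in the first coordinate $X_0$ (and, symmetrically, in $X_j$): on the region where both $|X_0|$ and $|X_j|$ exceed $\epsilon\tepseq$, the joint conditional law converges (after rescaling by $\tail F_0(\tepseq)$) to a limit measure that is a "lag $j+h$" exponent-type measure, and extremal independence at lag $j+h$ — more precisely the statement \eqref{eq:extremal-indep-joint-exceed} that the rescaled probability of $\{|X_0|>\tepseq y_0,\ (X_m,\dots,X_{\cdot})\in\tepseq A\}$ vanishes — shows this limiting contribution is $0$ because it requires two coordinates (one in each block, namely $X_0$ and $X_j$) to be simultaneously of order $\tepseq$; on the complementary region where $|X_0|\leq\epsilon\tepseq$ or $|X_j|\leq\epsilon\tepseq$ we use the negligibility condition \eqref{eq:negligibility-psi} (applied to each block, the blocks being jointly distributed as shifts of $\vbX_{0,h}$ by stationarity) to make the $\limsup$ arbitrarily small. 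Letting $\epsilon\to0$ finishes it. A subtle point worth being careful about is that in $V_{j,n}(\varphi)$ the normalization used is ${\bf b}_{0,h}(\tepseq)=(\tepseq,b_1(\tepseq),\dots,b_h(\tepseq))$ applied to $(X_j,\dots,X_{j+h})$, i.e.~$X_j$ is divided by $\tepseq$ not by $b_j(\tepseq)$; this is consistent with the definition of $\tedextrindep_{h,n}(\phi)$ and means both blocks are scaled "as if lag $0$", which is exactly the setting where extremal independence kills the joint contribution.

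The main obstacle I anticipate is the unbounded-$\psi$ case in the first assertion: vague convergence of measures only gives convergence of integrals of bounded continuous functions with bounded support, so to integrate $\phi^2$ (bounded, but controlled only by the possibly-unbounded $\psi$) against the rescaled law one cannot simply quote the vague convergence. The remedy — and the step that requires genuine care rather than routine bookkeeping — is a double truncation: cut $X_0$ away from zero at level $\epsilon\tepseq$ (handled by \eqref{eq:negligibility-psi}) \emph{and} cut the large values of the other coordinates at a high level $M$, controlling the discarded piece uniformly in $n$ by the $(2+\delta)$-moment bound \eqref{eq:psi-second-moment-1-extrindep} via Markov/Hölder (so that the tail of $\psi^2$ beyond $M$ contributes $O(M^{-\delta})$ uniformly), then apply vague convergence to the doubly-truncated, bounded, compactly-supported, a.e.-continuous integrand, and finally let $M\to\infty$ and $\epsilon\to0$. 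Since $\phi$ itself is bounded this simplifies, but one still needs $\psi^2\1{|x_0|\le\epsilon}$ to be negligible, which when $\psi$ is unbounded is not literally \eqref{eq:negligibility-psi} and must be derived from it together with \eqref{eq:psi-second-moment-1-extrindep} by Hölder. All the remaining manipulations — stationarity to identify the law of the shifted block, boundedness to get domination, monotone convergence on the limit side — are routine. Because, as the text notes, the details follow the pattern of \cite{kulik:soulier:wintenberger:2015} modified for the CEV normalization, I would present the truncation structure carefully and then refer to that paper for the repetitive estimates.
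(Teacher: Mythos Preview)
Your proposal is correct and follows essentially the same route as the paper: for the first assertion, truncate on $\{|X_0|>\epsilon\tepseq\}$, apply the vague convergence of \Cref{hypo:conditional-indep-measure} on the retained piece and \eqref{eq:negligibility-psi} on the discarded piece, then let $\epsilon\to0$; for the second assertion, split on $\{|X_0|\wedge|X_j|>\epsilon\tepseq\}$, use extremal independence (two coordinates cannot be simultaneously of order $\tepseq$) for the joint-exceedance piece and \eqref{eq:negligibility-psi} together with boundedness of $\phi,\varphi$ for the remainder. One simplification you are missing: since every $\phi\in\funcextrindep_\psi$ is \emph{bounded}, you have $\phi^2\le\|\phi\|_\infty|\phi|\le\constant\,\psi$, so \eqref{eq:negligibility-psi} already controls the small-$|X_0|$ piece directly and no H\"older step or $\psi^2$-bound is needed there; the $(2+\delta)$-moment condition \eqref{eq:psi-second-moment-1-extrindep} enters only through a truncation on $\{|V_{0,n}(\phi)|>A\}$ (which the paper uses to reduce to the bounded case), and likewise in the cross-term estimate the paper bounds one factor by $\|\varphi\|_\infty$ rather than by $\psi$, which avoids the product-of-$\psi$ issue you flag.
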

\begin{proof}
The proof of the first part is similar to \cite[Proposition~2]{kulik:soulier:2015}.

Assume that $\psi$ is bounded. For
  $\epsilon>0$, we write
\begin{align*}
  \frac{1}{\tail{F}_0(u_n)}\esp[|V_{0,n}^2(\phi)|]&=\frac{1}{\tail{F}_0(u_n)}\esp[|V_{0,n}^2(\phi)|\ind{|X_0|>\epsilon \tepseq}]+
  \frac{1}{\tail{F}_0(u_n)}\esp[|V_{0,n}^2(\phi)|\ind{|X_0|\leq\epsilon \tepseq}]\;.
\end{align*}
By vague convergence and boundedness of $\phi$ the first expression on the right hand side converges
to $\numultcond{0,h}(\phi^2\ind{|y_0|>\epsilon})<\infty$. Application of~\eqref{eq:negligibility-psi}
implies that $\numultcond{0,h}(\phi^2)$ is finite.

If $\psi$ is unbounded, then
then for all $A>0$,
  applying Markov and H\"older inequalities, we obtain
  \begin{align*}
    \lim_{A\to\infty}    \limsup_{n\to\infty} \frac{\esp[V_{0,n}^2(\phi)\ind{|V_{0,n}(\phi)|>A}]}{\tail{F}_0(u_n)}
    \leq \lim_{A\to\infty}   \constant \cdot A^{-\delta/2} \sup_{n\geq1} \frac{\esp[|V_{0,n}(\phi)|^{2+\delta}]}{\tail{F}_0(u_n)} = 0 \; .
  \end{align*}
Thus, we can split $V_{0,n}(\phi)$ as $V_{0,n}(\phi)\ind{|V_{0,n}(\phi)\leq A|}+V_{0,n}(\phi)\ind{|V_{0,n}(\phi)> A|}$ and apply the truncation argument.

As for the second part, thanks to the truncation argument, we can consider bounded functions $\phi,\varphi\in\funcextrindep_{\psi}$. We have
\begin{align}
  \lim_{n\to\infty}\frac{\esp[V_{0,n}(\phi)V_{j,n}(\varphi)]}{\tail{F}_0(u_n)}
  & =     \lim_{n\to\infty}\frac{\esp[V_{0,n}(\phi)V_{j,n}(\varphi)\ind{|X_0|\wedge |X_j|>\epsilon \tepseq}]}{\tail{F}_0(u_n)}
    \label{eq:existence-chj-1} \\
  & \phantom{ = } + \lim_{n\to\infty}\frac{\esp[V_{0,n}(\phi)V_{j,n}(\varphi)\ind{|X_0|\wedge |X_j|\leq \epsilon \tepseq}]}{\tail{F}_0(u_n)} \; .
    \label{eq:existence-chj-2}
\end{align}
The term in~\eqref{eq:existence-chj-1} vanishes, for each $\epsilon>0$, by boundedness of
$\varphi,\phi$ and extremal independence.  The term in~\eqref{eq:existence-chj-2} is bounded by
\begin{align*}
\lim_{n\to\infty}\frac{1}{\tail{F}_0(u_n)}\left\{
\|\varphi\|_{\infty}\esp[V_{0,n}(\phi)\ind{|X_0|\leq \epsilon \tepseq}]+
\|\phi\|_{\infty}\esp[V_{0,n}(\varphi)\ind{|X_0|\leq \epsilon \tepseq}]\right\}
\end{align*}
and hence vanishes as $\epsilon\to 0$ by~\eqref{eq:negligibility-psi}.
\end{proof}

\begin{lemma}\label{lem:cov-existence}
  Let~\Cref{hypo:conditional-indep-measure} hold with extremal independence at all lags and
  \Cref{hypo:basics} hold.  Let $\psi$ be a measurable function such
  that~\eqref{eq:negligibility-psi}, \eqref{eq:condition-sum-phi} hold and either $\psi$ is bounded
  or \eqref{eq:psi-second-moment-1-extrindep} holds.  Then, for all
  $\phi,\varphi\in\funcextrindep_\psi$,
  \begin{align*}
    \lim_{n\to\infty}
    \frac{ \esp\left[ S_n(\phi)S_n(\varphi) \right]}{\dhinterseq\tail{F}_0(\tepseq)}  & = \nucond_{0,h}(\phi\varphi) \; ,
  \end{align*}
  and
  \begin{align*}
    \lim_{n\to\infty}
    \frac{ \cov\left( S_n(\phi),S_n(\varphi) \right)}{\dhinterseq\pr(|X_0|>\tepseq)}  = \numultcond{0,h}(\phi\varphi)\;.
  \end{align*}
\end{lemma}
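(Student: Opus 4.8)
The plan is to reduce both identities to the diagonal case $\varphi=\phi$ and then feed in \Cref{lem:mucondphi-existence} together with the anticlustering condition \eqref{eq:condition-sum-phi}. Since $S_n$ is linear in its argument, $\esp[S_n(\phi)S_n(\varphi)]$ and $\cov(S_n(\phi),S_n(\varphi))$ are symmetric bilinear forms in $(\phi,\varphi)$; moreover $\phi\pm\varphi\in\funcextrindep_{\psi}$ whenever $\phi,\varphi\in\funcextrindep_{\psi}$ (the envelope bound $|\phi\pm\varphi|\leq\constant\,\psi$ and the $\numultcond{0,j+h}$-almost sure continuity are stable under addition), and by Cauchy--Schwarz together with the finiteness of $\numultcond{0,h}(\chi^2)$ for $\chi\in\funcextrindep_{\psi}$ proved in \Cref{lem:mucondphi-existence}, the form $(\phi,\varphi)\mapsto\numultcond{0,h}(\phi\varphi)$ is well defined and finite. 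By polarization (e.g.\ $4\esp[S_n(\phi)S_n(\varphi)]=\esp[S_n(\phi+\varphi)^2]-\esp[S_n(\phi-\varphi)^2]$, and similarly for $\cov$ and for $\numultcond{0,h}(\phi\varphi)$) it therefore suffices to show, for every $\chi\in\funcextrindep_{\psi}$,
\begin{align*}
  \lim_{n\to\infty}\frac{\esp[S_n(\chi)^2]}{\dhinterseq\tail{F}_0(\tepseq)}=\numultcond{0,h}(\chi^2)
  \qquad\text{and}\qquad
  \lim_{n\to\infty}\frac{\var(S_n(\chi))}{\dhinterseq\tail{F}_0(\tepseq)}=\numultcond{0,h}(\chi^2)\;.
\end{align*}
Passing to the diagonal is what keeps the truncation step of \Cref{lem:mucondphi-existence} available in the unbounded case, as there one truncates the square $\chi^2$ rather than a generic product.

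For the first limit I would expand $S_n(\chi)=\sum_{j=1}^{\dhinterseq}V_{j,n}(\chi)$ and use strict stationarity of $\sequence{X}$ to obtain
\begin{align*}
  \frac{\esp[S_n(\chi)^2]}{\dhinterseq\tail{F}_0(\tepseq)}
  =\frac{\esp[V_{0,n}^2(\chi)]}{\tail{F}_0(\tepseq)}
  +\sum_{0<|k|<\dhinterseq}\Bigl(1-\frac{|k|}{\dhinterseq}\Bigr)\frac{\esp[V_{0,n}(\chi)V_{k,n}(\chi)]}{\tail{F}_0(\tepseq)}\;.
\end{align*}
The first term on the right converges to $\numultcond{0,h}(\chi^2)$ by \eqref{eq:mucondh-phi-limit}. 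For the sum I would fix $\ell\geq1$ and split it into the near block $0<|k|\leq\ell$ and the far block $\ell<|k|<\dhinterseq$. For fixed $\ell$ the near block is a finite sum each of whose terms tends to $0$ by the second assertion of \Cref{lem:mucondphi-existence}, with weights bounded by $1$, hence it tends to $0$ as $n\to\infty$; the far block is controlled, using $|\chi|\leq\constant\,\psi$, by
\begin{align*}
  \Bigl|\sum_{\ell<|k|<\dhinterseq}\Bigl(1-\frac{|k|}{\dhinterseq}\Bigr)\frac{\esp[V_{0,n}(\chi)V_{k,n}(\chi)]}{\tail{F}_0(\tepseq)}\Bigr|
  \leq\frac{\constant^{2}}{\tail{F}_0(\tepseq)}\sum_{\ell<|k|\leq\dhinterseq}
  \esp\Bigl[\psi\Bigl(\frac{\vbX_{0,h}}{{\bf b}_{0,h}(\tepseq)}\Bigr)\psi\Bigl(\frac{\vbX_{k,k+h}}{{\bf b}_{0,h}(\tepseq)}\Bigr)\Bigr]\;,
\end{align*}
whose $\limsup_{n\to\infty}$ tends to $0$ as $\ell\to\infty$ by \eqref{eq:condition-sum-phi}. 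Taking $\limsup_{n\to\infty}$ and letting $\ell\to\infty$ yields the first limit.

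For the variance I would write $\var(S_n(\chi))=\esp[S_n(\chi)^2]-(\esp[S_n(\chi)])^2$ and note that, by stationarity, $\esp[S_n(\chi)]=\dhinterseq\,\esp[V_{0,n}(\chi)]$, so that $(\esp[S_n(\chi)])^2/(\dhinterseq\tail{F}_0(\tepseq))=(\esp[V_{0,n}(\chi)]/\tail{F}_0(\tepseq))^2\,\dhinterseq\tail{F}_0(\tepseq)$. Here $\esp[V_{0,n}(\chi)]/\tail{F}_0(\tepseq)$ converges to the finite limit $\numultcond{0,h}(\chi)$ (the first-moment analogue of \Cref{lem:mucondphi-existence}, cf.\ the discussion preceding \Cref{lem:clt-array-fidi}), whereas $\dhinterseq\tail{F}_0(\tepseq)\to0$ by \eqref{eq:rnbarFun0}; hence this term vanishes and $\var(S_n(\chi))/(\dhinterseq\tail{F}_0(\tepseq))$ has the same limit $\numultcond{0,h}(\chi^2)$ as $\esp[S_n(\chi)^2]/(\dhinterseq\tail{F}_0(\tepseq))$. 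Undoing the polarization then gives both identities stated in the lemma.

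The main obstacle is the control, uniform in $n$, of the far block $\sum_{\ell<|k|<\dhinterseq}$: because the number of summands grows to infinity one cannot exchange the limit in $n$ with the summation, and the point of the anticlustering condition \eqref{eq:condition-sum-phi} is precisely to dominate this block by a quantity that is negligible once $\ell$ is large, uniformly in $n$ --- whence the two limits must be taken in the order ``$\limsup$ in $n$ first, then $\ell\to\infty$''. The remaining steps are routine bookkeeping on top of \Cref{lem:mucondphi-existence}.
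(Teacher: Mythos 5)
Your proof is correct and follows essentially the same route as the paper's: expand $\esp[S_n(\phi)S_n(\varphi)]$ by stationarity into a triangularly weighted sum over lags, identify the lag-zero limit via \Cref{lem:mucondphi-existence}, kill the near off-diagonal terms by the second part of that lemma and the far block by \eqref{eq:condition-sum-phi} (with the limits taken in the order $n\to\infty$ then $\ell\to\infty$, exactly as you insist), and dispose of the centering term via $\dhinterseq\tail{F}_0(\tepseq)\to0$. The only difference is your explicit polarization to the diagonal, which the paper sidesteps by noting $V_{0,n}(\phi)V_{0,n}(\varphi)=V_{0,n}(\phi\varphi)$ — though since \Cref{lem:mucondphi-existence} is literally stated for squares, your polarization (legitimate, as $\funcextrindep_{\psi}$ is a linear space) arguably makes that application more precise.
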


\begin{proof}
  By stationarity we can write, for $\ell\geq1$,
  \begin{align*}
    \frac{ \esp [ S_n(\phi)S_n(\varphi)]}{r_n\tail{F}_0(u_n)}
    = \sum_{j=-\ell}^\ell \left(1-\frac{|j|}{r_n}\right) \frac{\esp[V_{0,n}(\phi)V_{j,n}(\varphi)]}{\tail{F}_0(u_n)}
    + O \left(\sum_{j=\ell+1}^{r_n}  \frac{\esp[V_{0,n}(\phi)V_{j,n}(\varphi)]}{\tail{F}_0(u_n)} \right)   \; .
  \end{align*}
  Since $V_{0,n}(\phi) V_{0,n}(\varphi) = V_{0,n}(\phi\varphi)$, \Cref{lem:mucondphi-existence}
  shows that the term on the right hand side converges to $\nucond_{0,h}(\phi\varphi)$. The second
  term vanishes by assumption~\eqref{eq:condition-sum-phi}, upon letting $n\to\infty$ and then
  $\ell\to\infty$.

  Finally, by~\Cref{hypo:basics},
  \begin{align*}
    \frac{1}{\tail{F}_0(u_n)}\sum_{j=1}^{\dhinterseq} \esp[V_{0,n}(\phi)]\esp[V_{j,n}(\varphi)]
    =\left(\frac{\esp[V_{0,n}(\phi)]}{\tail{F}_0(u_n)}\right)^2\dhinterseq\tail{F}_0(u_n)\to 0
  \end{align*}
  and hence the result for the covariances follows.
\end{proof}
The next result can be proven along the same lines as of
\cite[Lemmas~3.6-3.7]{kulik:soulier:wintenberger:2015}. In case of unbounded functions, we need
additionally \eqref{eq:rate-condition-fidi-unbounded}.

\begin{lemma}
  \label{lem:negligibility-extrindep}
  Let~\Cref{hypo:conditional-indep-measure} hold with extremal independence at all lags and
  \Cref{hypo:basics} hold.  Let $\psi$ be a measurable function such that
  \eqref{eq:negligibility-psi},~\eqref{eq:condition-sum-phi} hold and either $\psi$ is bounded or
  there exists $\delta\in (0,1]$ such that~\eqref{eq:psi-second-moment-1-extrindep}
  and~\eqref{eq:rate-condition-fidi-unbounded} hold.  Then
  \begin{align}
    \lim_{n\to\infty} \frac{ \esp\left[S_n^2(\phi)\ind{|S_n(\phi)|> \delta \sqrt{n\tail{F}_0(\tepseq)}}\right]}
    {\dhinterseq\tail{F}_0(\tepseq)} = 0  \; .
  \end{align}
\end{lemma}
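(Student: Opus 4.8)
\emph{Sketch of proof.} This is the block Lindeberg condition behind the big‑block/small‑block central limit theorem in \Cref{lem:clt-array-fidi}. Writing $M_n=\delta\sqrt{n\tail{F}_0(\tepseq)}$ and $\sigma_n^2=\esp[S_n^2(\phi)]$, \Cref{lem:cov-existence} gives $\sigma_n^2\sim\dhinterseq\tail{F}_0(\tepseq)\,\nucond_{0,h}(\phi^2)$, while~\eqref{eq:rnbarFun0} gives $\dhinterseq\tail{F}_0(\tepseq)\to0$ and $n\tail{F}_0(\tepseq)\to\infty$, hence $\dhinterseq=o(n)$ and $M_n^2/\sigma_n^2$ is of order $n/\dhinterseq\to\infty$. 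I would follow the computations of \cite[Lemmas~3.6--3.7]{kulik:soulier:wintenberger:2015}, treating separately the two cases allowed in the hypothesis.

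When $\psi$ (hence every $\phi\in\funcextrindep_\psi$) is bounded, I would apply Chebyshev's inequality in the form $\ind{|S_n(\phi)|>M_n}\le S_n^2(\phi)/M_n^2$, so that the quantity to estimate is at most $\esp[S_n^4(\phi)]/M_n^2$ and it remains to show $\esp[S_n^4(\phi)]=o\bigl(n\dhinterseq\tail{F}_0(\tepseq)^2\bigr)$. Expanding $\esp[S_n^4(\phi)]$ over quadruples of indices in $\{1,\dots,\dhinterseq\}$, ordering the indices and grouping the quadruples according to the size of the largest gap between consecutive indices, the clustered quadruples are handled by boundedness together with $\esp[V_{0,n}^2(\phi)]=O(\tail{F}_0(\tepseq))$ (from \Cref{lem:mucondphi-existence}), and the widely separated ones are decoupled through the $\beta$‑mixing rate~\eqref{eq:beta}; the pair‑expectation sums that arise are then controlled using $|\phi|\le\constant\,\psi$ and the anti‑clustering bound~\eqref{eq:condition-sum-phi}, which yields $\esp[S_n^4(\phi)]=O\bigl(\dhinterseq^2\tail{F}_0(\tepseq)^2+\dhinterseq\tail{F}_0(\tepseq)\bigr)$ and hence, by $\dhinterseq=o(n)$ and $n\tail{F}_0(\tepseq)\to\infty$, the required bound.

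In the unbounded case I would first note that, since the exponent $\delta$ of~\eqref{eq:psi-second-moment-1-extrindep} lies in $(0,1]$ and $n\tail{F}_0(\tepseq)\to\infty$, condition~\eqref{eq:rate-condition-fidi-unbounded} forces $\dhinterseq=o\bigl((n\tail{F}_0(\tepseq))^{1/2}\bigr)=o(M_n)$, in particular $\dhinterseq^2=o(n\tail{F}_0(\tepseq))$. Fix $A>0$ and split $V_{j,n}(\phi)=V_{j,n}(\phi)\ind{|V_{j,n}(\phi)|\le A}+V_{j,n}(\phi)\ind{|V_{j,n}(\phi)|>A}$, with the induced decomposition $S_n(\phi)=S_n^{\le A}+S_n^{>A}$. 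Since $|S_n^{\le A}|\le A\dhinterseq=o(M_n)$, for $n$ large the event $\{|S_n^{\le A}|>M_n/2\}$ is empty; combining this with $\pr(|S_n^{>A}|>M_n/2)\le4\esp[(S_n^{>A})^2]/M_n^2$ and $\dhinterseq^2=o(n\tail{F}_0(\tepseq))$, the contribution of $S_n^{\le A}$ to $\esp[S_n^2(\phi)\ind{|S_n(\phi)|>M_n}]$ is $o(\dhinterseq\tail{F}_0(\tepseq))$ for each fixed $A$. For the remaining part I would bound $\esp[(S_n^{>A})^2]$: its diagonal $\dhinterseq\,\esp[V_{0,n}^2(\phi)\ind{|V_{0,n}(\phi)|>A}]\le\dhinterseq A^{-\delta}\esp[|V_{0,n}(\phi)|^{2+\delta}]=O(A^{-\delta}\dhinterseq\tail{F}_0(\tepseq))$ by~\eqref{eq:psi-second-moment-1-extrindep}; in the off‑diagonal, $\esp[|V_{0,n}(\phi)V_{j,n}(\phi)|\ind{|V_{0,n}(\phi)|>A}]$ is $O(A^{-\delta/2}\tail{F}_0(\tepseq))$ for $|j|\le\ell$ by Cauchy--Schwarz and~\eqref{eq:psi-second-moment-1-extrindep}, and for $\ell<|j|\le\dhinterseq$ it is at most $\constant^2$ times the product of $\psi$‑factors in~\eqref{eq:condition-sum-phi}, which sums to $o(\tail{F}_0(\tepseq))$; altogether $\esp[(S_n^{>A})^2]=O\bigl((A^{-\delta}+\ell A^{-\delta/2}+c_\ell)\dhinterseq\tail{F}_0(\tepseq)\bigr)$, where $c_\ell$ is the finite $\limsup_n$ in~\eqref{eq:condition-sum-phi} and $c_\ell\to0$ as $\ell\to\infty$. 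Letting $n\to\infty$, then $A\to\infty$, then $\ell\to\infty$ gives the claim.

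The main obstacle is the fourth‑moment estimate in the bounded case (and, in the same spirit, the off‑diagonal estimate for $\esp[(S_n^{>A})^2]$): getting the correct power of $\dhinterseq\tail{F}_0(\tepseq)$ requires combining the anti‑clustering condition~\eqref{eq:condition-sum-phi} with the mixing rate~\eqref{eq:beta} rather than with crude sup‑norm bounds, which is exactly the computation of \cite[Lemmas~3.6--3.7]{kulik:soulier:wintenberger:2015} that the lemma is stated to mirror.
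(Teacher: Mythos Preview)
The paper gives no proof of this lemma, stating only that it ``can be proven along the same lines as of \cite[Lemmas~3.6--3.7]{kulik:soulier:wintenberger:2015}'', with the extra rate condition~\eqref{eq:rate-condition-fidi-unbounded} needed in the unbounded case. Your sketch is therefore fully aligned with the paper's own treatment: you invoke the same reference, correctly separate the bounded and unbounded regimes, carry out the truncation argument in the unbounded case in essentially complete detail (using $r_n=o(M_n)$ from~\eqref{eq:rate-condition-fidi-unbounded}, the moment bound~\eqref{eq:psi-second-moment-1-extrindep} for the diagonal and short-range off-diagonal, and~\eqref{eq:condition-sum-phi} for the long-range off-diagonal), and explicitly flag the fourth-moment estimate in the bounded case as the technical point to be read off from \cite{kulik:soulier:wintenberger:2015}.
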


\begin{proof}[Proof of \Cref{lem:clt-array-fidi}]
  Define $m_n=[n/\dhinterseq]$ and let $\{X_{n,i}^\indep, 1 \leq i \leq m_n,n\geq1\}$ be a
  triangular array of random variables such that the blocks
  $\{X_{(i-1)\dhinterseq+1}^\indep,\dots,X_{i\dhinterseq}^\indep\}$ are independent and each have
  the same distribution as the original stationary blocks, \ie\ the same distribution as
  $(X_1,\dots,X_{\dhinterseq})$ by stationarity of the original sequence. For $i=1,\ldots,m_n$,
  define
  \begin{align*}
    V_{j,n}^\indep(\phi)
    & = \phi\left(\frac{X_j^\indep}{\tepseq},\frac{X_{j+1}^\indep}{b_1(\tepseq)},\ldots,\frac{X_{j+h}^\indep}{b_h(\tepseq)}\right) \; , \ \
    S_{n,i}^\indep(\phi)  = \sum_{j=(i-1)r_n+1}^{ir_n}V_{j,n}^\indep(\phi)\;,
  \end{align*}
  Arguing as in the proof of \cite[Theorem~2.8]{drees:rootzen:2010} or
  \cite{kulik:soulier:wintenberger:2015}, the $\beta$-mixing property and the rate
  condition~(\ref{eq:beta}) implies that it suffices to prove weak convergence of the process
  \begin{align*}
    \widetilde{\tepextrindep}_{h,n}^\indep(\phi)=\{n\tail{F}_0(\tepseq)\}^{-1/2}\sum_{i=1}^{m_n}
    \left\{S_{n,i}^\indep(\phi)-\esp[S_{n,i}^\indep(\phi)]\right\}\;,
  \end{align*}
  along with the appropriate bias condition.  In the first step we show existence of the limiting
  covariance. For $\phi,\varphi$ we have
  \begin{align*}
    \frac{1}{\tail{F}_0(\tepseq)} \sum_{j=1}^{r_n} \esp[V_{0,n}(\phi)] \esp[V_{j,n}(\varphi)]
    = \left(\frac{\esp[V_{0,n}(\phi)]}{\tail{F}_0(\tepseq)}\right) \left(\frac{\esp[V_{0,n}(\varphi)]}{\tail{F}_0(\tepseq)}\right)
    r_n\tail{F}_0(\tepseq) \to 0 \; ,
  \end{align*}
  by~\eqref{eq:mucondh-phi-limit} and~\eqref{eq:rnbarFun0}. Application of~\Cref{lem:cov-existence}
  yields the limiting covariance:
  \begin{align*}
    \lim_{n\to\infty}\cov\left(\widetilde{\tepextrindep}_{h,n}^\indep(\phi),\widetilde{\tepextrindep}_{h,n}^\indep(\varphi)\right)
    =\nucond_{0,h}(\phi\varphi)\;.
  \end{align*}
  \Cref{lem:negligibility-extrindep} finishes the proof.
\end{proof}

\subsection{Proof of \Cref{theo:tep-extrindep}}
\label{sec:proof-of-theo:tep-extrindep}
We only consider the case of extremal independence, since the extremally dependent case can be
concluded directly from \cite{kulik:soulier:wintenberger:2015}. Let $s_0\in (0,1)$. We apply the
results of~\Cref{sec:tail-array-sums} to the function $\psi(x_0,\ldots,x_h)=\ind{|x_0|>s_0}$ and the
class $\mcg_0=\{\phi_{s,y}:(x_0,\ldots,x_h)\to \ind{|x_0|>s,x_h\leq y},s\geq s_0,y\in\Rset\}$.  Then
\begin{align}
  \label{eq:I-M}
  \widetilde\TEPextrindep_{h,n}(s,y) = \widetilde\tepextrindep_{h,n}(\phi_{s,y})\;, \ \ \TEPextrindep_{h,n}(s,y) = \tepextrindep_{h,n}(\phi_{s,y})\;.
\end{align}
We need to check assumptions~\eqref{eq:negligibility-psi}-\eqref{eq:condition-sum-phi}:
\begin{itemize}
\item Condition~\eqref{eq:negligibility-psi} trivially holds since its right hand side vanishes
  whenever $\epsilon\in (0,s_0)$;
\item Condition~\eqref{eq:condition-sum-phi} for the function $\psi$ is implied by the
  anticlustering condition~\eqref{eq:conditiondhs} of \Cref{hypo:fclt-infeasible};
\end{itemize}
Since~\Cref{hypo:conditional-indep-measure,hypo:basics} are already assumed
in~\Cref{theo:tep-extrindep}, by~\Cref{lem:clt-array-fidi}, the finite dimensional distributions of
\begin{align*}
  \sqrt{n\tail{F_0}(\tepseq)} \{\TEDextrindep_{h,n}(s,y) - \esp[\TEDextrindep_{h,n}(s,y)]\}
\end{align*}
converge to those of ${\TEPextrindep}_h$.

To prove tightness, we apply~\Cref{thm:fclt-array}. Condition~\eqref{eq:second-order-psi} reduces
to~\eqref{eq:no-bias-orderstat-bivariate}. It remains to
verify~\ref{item:pointwise-separable}-\ref{item:continuity}. Define the semi-metric $\rho_h$ on
$(0,\infty)\times\Rset$ by
\begin{align}
  \label{eq:metric}
  \rho_h((s,y),(t,z)) = |s-t| + |\cevcdfuniv_h(y)-\cevcdfuniv_h(z)|  \; .
\end{align}
This also defines the semi-metric on $\mcg_0$.  The class is clearly pointwise separable and the
envelope function is $\ind{(s_0,\infty)\times\Rset^h} \in\funcextrindep_{\psi}$.  Also, the class of
indicators of the sets $(s,\infty)\times (-\infty,y]$ is a VC class of index 2.  The proof of
\ref{item:totally-bounded}-\ref{item:continuity} follows the same lines as that of \cite[Theorem
2.11]{kulik:soulier:wintenberger:2015}.  It remains to identify the limiting Gaussian process. For
$0<s<t$ and $y,z\in\Rset$, we have
\begin{align*}
  \cov({\tepextrindep}(\phi_{s,y}),{\tepextrindep}(\phi_{t,z}))
  & =\nucond_{0,h}(\phi_{s,y}\phi_{t,z}) =  t^{-\alpha} \cevcdfuniv_h(t^{-\scalingexp_h}(y\wedge z)) \; .
\end{align*}
This finishes the proof.

\subsection{Central limit theorem for the conditional mean}
\label{sec:cond-mean}
We state another corollary to \Cref{thm:fclt-array}.  For $s>0$, set
$\psi_s(x_0,\ldots,x_h)=|x_h|\ind{|x_0|>s}$ and define the class
\begin{align*}
  \mcg_1=\{\psi_s,s\geq s_0\}\;.
\end{align*}
Then,
\begin{align}
  \label{eq:convergence-sums}
  &\tepextrindep_{h,n}(\psi_{s})=\sqrt{n\tail{F}_0(u_n)}\left\{\frac{1}{n\tail{F}_0(u_n)} \sum_{j=1}^{n-h} \frac{|X_{j+h}|}{b_h(u_n)}
    \ind{|X_j|> u_n s}-\nucond_{0,h}(\psi_s)\right\} \; .
\end{align}
By the homogeneity property \eqref{eq:homogeneity-cev} and the assumption
$\int_{-\infty}^\infty |y| \, \cevcdfuniv_h(\rmd y) =1$, we have
\begin{align}
  \nucond_{0,h}(\psi_s) = s^{\scalingexp_h-\alpha} \; .
\label{eq:nucondpsi_s}
\end{align}

\begin{corollary}
  \label{cor:sums}
  Let $\sequence{X}$ be a strictly stationary regularly varying sequence such
  that~\Cref{hypo:conditional-indep-measure} holds with extremal independence at all lags and
  that~\Cref{hypo:basics} is satisfied. Assume that \Cref{hypo:fclt-infeasible,item:B2,hypo:moments}
  and \eqref{eq:rate-condition-fidi-unbounded} hold.  Then
  \begin{align*}
    \tepextrindep_{h,n}    \convweak     \teplimextrindep_h \; , \ \mbox{ in } \ell^\infty(\mcg_0\cup \mcg_1) \; .
  \end{align*}
\end{corollary}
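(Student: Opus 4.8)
The plan is to verify that the hypotheses of \Cref{thm:fclt-array} are satisfied for the envelope function $\psi(x_0,\dots,x_h) = \ind{|x_0|>s_0} + |x_h|\ind{|x_0|>s_0}$ (or, more economically, apply the theorem separately on $\mcg_0$ and on $\mcg_1$ and then combine) and to identify the limiting covariance. First I would treat $\mcg_0$, for which the conclusion is already contained in \Cref{theo:tep-extrindep}: under \Cref{hypo:basics,hypo:fclt-infeasible,item:B2} the process $\TEPextrindep_{h,n}$ restricted to $\mcg_0$ converges weakly to $\TEPextrindep_h$. So the real content is the class $\mcg_1 = \{\psi_s, s\geq s_0\}$ with $\psi_s(x_0,\dots,x_h)=|x_h|\ind{|x_0|>s}$, which is unbounded, so I must use the branch of \Cref{thm:fclt-array} requiring \eqref{eq:psi-second-moment-1-extrindep}, \eqref{eq:rate-condition-fidi-unbounded} and the anticlustering condition \eqref{eq:condition-sum-phi}.

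Concretely, I would take $\psi(x_0,\dots,x_h)=|x_h|\ind{|x_0|>s_0}$ as the envelope for $\mcg_1$ and check the items of \Cref{thm:fclt-array} in order. Condition \eqref{eq:negligibility-psi} is immediate because the indicator $\ind{|x_0|>s_0}$ forces the truncation $\ind{|X_0|\le\epsilon u_n}$ to vanish once $\epsilon<s_0$. Condition \eqref{eq:psi-second-moment-1-extrindep} is exactly \eqref{eq:psi-second-moment-extrindep} of \Cref{hypo:moments} (with the same $\delta$ and $s_0$), and the anticlustering condition \eqref{eq:condition-sum-phi} for this $\psi$ is precisely \eqref{eq:conditiondhs-ext-extrindep}. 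The rate condition \eqref{eq:rate-condition-fidi-unbounded} is assumed directly. For the structural conditions \ref{item:pointwise-separable}--\ref{item:continuity}: pointwise separability of $\mcg_1$ is clear (rational $s$); the envelope $\psi_{s_0}$ is in $\funcextrindep_\psi$ by construction and the moment bound; $\mcg_1$ is a one–parameter monotone family of the form $\{(x_0,\dots,x_h)\mapsto |x_h|\ind{|x_0|>s}\}$, hence a VC subgraph class (the subgraphs are nested in the $x_0$–direction); total boundedness and the $L^2$–continuity condition \eqref{eq:continuite-pour-notre-theorem} hold for the semi-metric $\rho_h(s,t)=|s^{\scalingexp_h-\alpha}-t^{\scalingexp_h-\alpha}|$, using $\esp[(\psi_s-\psi_t)^2(\vbX_{0,h}/\mathbf{b}_{0,h}(u_n))]/\tail{F}_0(u_n)= \esp[|X_h/b_h(u_n)|^2\ind{s\wedge t<|X_0|/u_n\le s\vee t}]/\tail{F}_0(u_n)$, which tends to $\nucond_{0,h}(|x_h|^2\ind{s<|x_0|\le t})$ by \Cref{lem:mucondphi-existence} and the uniform integrability coming from \eqref{eq:psi-second-moment-extrindep}; the argument here parallels \cite[Theorem~2.11]{kulik:soulier:wintenberger:2015}. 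Finally the bias condition \eqref{eq:second-order-psi} over $\mcg_1$ is \eqref{eq:no-bias-sums}, using $\nucond_{0,h}(\psi_s)=s^{\scalingexp_h-\alpha}$ from \eqref{eq:nucondpsi_s} and $\esp[|W_h|]=1$. Joint weak convergence on $\mcg_0\cup\mcg_1$ follows by applying \Cref{thm:fclt-array} to the union of two VC subgraph classes with common envelope $\ind{|x_0|>s_0}+|x_h|\ind{|x_0|>s_0}$, and the limit $\teplimextrindep_h$ is the Gaussian process with covariance $(\phi,\varphi)\mapsto\nucond_{0,h}(\phi\varphi)$ given by \Cref{lem:clt-array-fidi}; on $\mcg_0\times\mcg_0$ this is the covariance of $\TEPextrindep_h$, on $\mcg_1\times\mcg_1$ it is $\nucond_{0,h}(\psi_s\psi_t)=\esp[Y_0^2]\,\esp[|W_h|^2]\,(s\vee t)^{\scalingexp_h\cdot 2-\alpha}$ type expressions evaluated via \eqref{eq:representation-nucond}, and the cross terms are $\nucond_{0,h}(\phi_{s,y}\psi_t)$.

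The main obstacle I anticipate is the verification of the continuity/total-boundedness conditions \ref{item:totally-bounded}--\ref{item:continuity} for the \emph{unbounded} class $\mcg_1$ and, relatedly, the tightness half of \Cref{thm:fclt-array}: one must control $L^2$–oscillations of $|X_h/b_h(u_n)|\ind{|X_0|>u_n s}$ uniformly in $s$, which requires the $(2+\delta)$–moment bound \eqref{eq:psi-second-moment-extrindep} to upgrade vague convergence of second moments into genuine uniform integrability, exactly as in the truncation argument of \Cref{lem:mucondphi-existence}. Everything else is bookkeeping: matching each abstract hypothesis of \Cref{thm:fclt-array} to its concrete counterpart in \Cref{hypo:basics,hypo:fclt-infeasible,item:B2,hypo:moments} and invoking the stated results. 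I would write the proof by first disposing of $\mcg_0$ via \Cref{theo:tep-extrindep}, then devoting the bulk to $\mcg_1$, and closing with the one–line identification of the limiting covariance from \Cref{lem:clt-array-fidi}.
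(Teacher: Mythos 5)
Your proposal follows essentially the same route as the paper's proof: it applies \Cref{thm:fclt-array} to $\mcg_1$ with envelope $|x_h|\ind{|x_0|>s_0}$, matches \eqref{eq:negligibility-psi}, \eqref{eq:psi-second-moment-1-extrindep}, \eqref{eq:condition-sum-phi} and \eqref{eq:second-order-psi} to their counterparts in \Cref{hypo:moments}, verifies the VC-subgraph, total-boundedness and $L^2$-continuity conditions for the linearly ordered class, and obtains the joint convergence on $\mcg_0\cup\mcg_1$ by combining with \Cref{theo:tep-extrindep}. The only cosmetic difference is your choice of semi-metric (based on the first rather than the second conditional moment), which is equivalent for the purposes of \ref{item:totally-bounded}--\ref{item:continuity} since $\alpha>2$.
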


\begin{proof}
  Let $s_0\in (0,1)$. We apply the results of~\Cref{sec:tail-array-sums}.  We need to check the
  assumptions \eqref{eq:negligibility-psi}-\eqref{eq:condition-sum-phi}:
  \begin{itemize}
  \item Condition \eqref{eq:negligibility-psi} holds trivially for
    $\psi(x_0,\ldots,x_h)=|x_h|\ind{|x_0|>s_0}$.
  \item \eqref{eq:psi-second-moment-extrindep} of \Cref{hypo:moments} implies \eqref{eq:psi-second-moment-1-extrindep}.
  \item \eqref{eq:conditiondhs-ext-extrindep} of \Cref{hypo:moments} implies \eqref{eq:condition-sum-phi}.
  \end{itemize}
  For tightness, we apply \Cref{thm:fclt-array}.  Condition~\eqref{eq:second-order-psi} reduces
  to~\eqref{eq:no-bias-sums}. It remains to
  verify~\ref{item:pointwise-separable}-\ref{item:continuity}. The class $\mcg_1$ is separable and
  linearly ordered, hence VC-subgraph class.  The envelope function $|x_h|\ind{|x_0|>s_0}$ belongs
  to $\mcg_1$.

Define the semi-metric $\rho_h$ on $\mcg_1$ by
\begin{align}
  \label{eq:metric-sums}
  \rho_h(\psi_s,\psi_t) = \esp[Y_0^2W_h^2\ind{Y_0\in (s,t]}]  \; .
\end{align}
Since \Cref{hypo:moments} implies  $\alpha>2$ we have for $s<t$,
\begin{align*}
  \rho_h(\psi_s,\psi_t)=\frac{\alpha}{\alpha-2}\esp[W_h^2](s^{-\alpha+2}-t^{-\alpha+2})\leq \constant (t-s)\;.
\end{align*}
Hence, $(\mcg_1,\rho_h)$ is totally bounded. Moreover, by regular variation and the uniform convergence
theorem, the convergence
\begin{align*}
  \lim_{n\to\infty}\frac{1}{b_h^2(u_n)\tail{F}_0(u_n)}\esp[X_h^2\left(\ind{X_0>su_n}   - \ind{X_0>tu_n}\right)]
  = \frac{\alpha}{\alpha-2}\esp[W_h^2](s^{-\alpha+2}-t^{-\alpha+2})
\end{align*}
is uniform on compact sets of $(0,\infty)$. Hence, \ref{item:continuity} holds.  The joint
convergence holds by applying \Cref{theo:tep-extrindep} and considering the class
$\mcg=\mcg_0\cup\mcg_1$.
\end{proof}

\subsection{Proof of \Cref{coro:tepindep-semifeasible,theo:clt-psi}}

\begin{proof}[Proof of \Cref{coro:tepindep-semifeasible}]
  Set $\tepabs_n(s) = k^{-1} \sum_{j=1}^n \ind{|X_j|>\tepseq s}$. By \Cref{cor:sums} and Vervaat's lemma
  \cite{vervaat:1972}, we obtain the joint convergence
  \begin{multline}
    \left\{\tepextrindep_{h,n}(\psi_s),\TEPextrindep_{h,n}(s,y),\sqrt{k}
      \left(\tepabs_n(s)-s^{-\alpha}\right),  \sqrt{k}\left(\tepseq^{-1}\orderstat[|X|]{n}{k}-1\right) \right\} \\
    \convweak
    \left\{\tepextrindep_h(\psi_s),\tepextrindep_h(\phi_{s,y}),\tepextrindep_h(\phi_{s,\infty}),
      \alpha^{-1}\tepextrindep_h(\phi_{1,\infty})\right\} \; .
    \label{eq:joint-convergence}
  \end{multline}
  Write $\xi_n=\tepseq^{-1}\orderstat[|X|]{n}{k}$ and note that the above weak convergence implies
  that $\xi_n\to 1$ in probability.  Note that by~(\ref{eq:nucondpsi_s}), we have
  \begin{align*}
    \nucond_{0,h}(\psi_{\xi_n})         =\xi_n^{\scalingexp_h-\alpha}  \;.
  \end{align*}
  In view of this identity and  \eqref{eq:convergence-sums}, we have
  \begin{align}
    \label{eq:integrals-main-decomposition}
    \sqrt{k} \left(  \frac{\widehat\scalfunccev_{h,n}}{\scalfunccev_{h}(\tepseq)} - 1 \right)
    &  = \tepextrindep_{h,n}(\psi_{\xi_n})+\sqrt{k}\left\{\xi_n^{\scalingexp_h-\alpha}-1\right\}\;.
  \end{align}
  Thus,
  \begin{align*}
    \sqrt{k} \left(  \frac{\widehat\scalfunccev_{h,n}}{\scalfunccev_{h}(\tepseq)} - 1 \right)
    &    \convdistr \tepextrindep_h({\psi_1})  +  \alpha^{-1}(\scalingexp_h-\alpha)  \tepextrindep_h(\phi_{1,\infty})  \; .
  \end{align*}
  The process $\bmotion$ defined by $\bmotion(u) = \tepextrindep_h(\phi_{1,\cevcdfuniv_h^{-1}(u)})$
  is a standard Brownian motion and $\bbridge(u) = \bmotion(u) -u\bmotion(1)$ is a standard Brownian
  bridge. Therefore,
  \begin{align*}
    \tepextrindep_h({\psi_1}) = \int_0^1 |\cevcdfuniv_h^{-1}(u)| \bmotion(\rmd u) \; .
  \end{align*}
  By assumption, we have
  \begin{align*}
    \int_0^1 |\cevcdfuniv_h^{-1}(u)| \rmd u  = \int_{-\infty}^\infty |y| \cevcdfuniv_h(\rmd y) = 1 \; ,
  \end{align*}
  thus $\tepextrindep_h({\psi_1})  -  \tepextrindep_h(\phi_{1,\infty}) = \int_0^1 | \cevcdfuniv_h^{-1}(u)| \bbridge(\rmd u)$.
\end{proof}

\begin{proof}[Proof of \Cref{theo:clt-psi}]
  Writing $\theta_n= \scalfunccev_h^{-1}(\tepseq) \widehat\scalfunccev_{h,n}$, we have
  \begin{align}
    \sqrt{\statinterseq} \left\{\hatcevcdf_{h,n}(y) - \cevcdfuniv_h(y)\right\}
    & = \sqrt{\statinterseq} \left\{\TEDextrindep_{h,n} \left(\frac{\orderstat{n}{n-\statinterseq}}{\tepseq},
      \frac{\widehat\scalfunccev_{h,n}}{\scalfunccev_h(\tepseq)}y\right) -\cevcdfuniv_{h}(y)\right\} \nonumber  \\
    & = \TEPextrindep_{h,n}(\xi_n,\theta_ny) +
      \sqrt{\statinterseq} \left(\xi_n^{-\alpha} \cevcdfuniv_h(\xi_n^{-\scalingexp_h}\theta_n y) - \cevcdfuniv_h(y)\right) \label{eq:mainterms}\;.
  \end{align}
  By \Cref{coro:tepindep-semifeasible}, $\xi_n\convprob 1$ and $\theta_n\convprob 1$. Hence, the
  first term in (\ref{eq:mainterms}) converges to $\tepextrindep_h(\phi_{1,y})$.  The delta method implies
  that the limiting behaviour of the second term in~(\ref{eq:mainterms}) is the same as that of
  \begin{align*}
    \sqrt{k}\cevcdfuniv_h(y)\left\{\xi_n^{-\alpha}-1\right\}
    + \xi_n^{-\alpha}\sqrt{k}y\cevcdfuniv_h'(y)\left\{\xi_n^{-\kappa_h}-1+\xi_n^{-\kappa_h}(\theta_n-1)\right\} \; ,
  \end{align*}
  which by \Cref{coro:tepindep-semifeasible} converges weakly to
  \begin{align*}
    - \cevcdfuniv_h(y) \tepextrindep_h(\phi_{1,\infty}) + y \cevcdfuniv_h'(y) \{
    \tepextrindep_h(\psi_{1}) - \tepextrindep_h(\phi_{1,\infty}) \} \; .
  \end{align*}
  Moreover, with $\bmotion$ and $\bbridge$ as above, we have
  \begin{align*}
    \{    \tepextrindep_h(\phi_{1,y}) - \cevcdfuniv_h(y) \tepextrindep_h(\phi_{1,\infty}) , y\in\Rset\}
    = \bbridge \circ \cevcdfuniv \; .
  \end{align*}
  This concludes the proof.
\end{proof}

\subsection{Proof of~\Cref{thm:clt-kappa}}
Set $\beta_h=\alpha/(1+\kappa_h)$ and $\gamma_h = \beta_h^{-1}$.  At the first step we justify
functional convergence of the tail empirical process based on products:
\begin{align}
  \label{eq:tep-product}
  \sqrt{n\tail{F}_0(u_n)}\left\{\frac{1}{n\tail{F}_0(u_n)} \sum_{j=1}^{n-h}\ind{|X_jX_{j+h}| > \|W_h\|_{\beta_h} u_n b_h(u_n) s }-s^{-\beta_h}\right\} \; .
\end{align}
Define $\psi(x_0,\ldots,x_h)=\ind{|x_0|>s_0}+\ind{\|W_h\|_{\beta_h}|x_0x_h|> s_0}$ and the class
$\mcg=\mcg_0'\cup\mcg_1'$ with
\begin{align*}
  \mcg_0' & = \{\mci_{s}:(x_0,\ldots,x_h)\to \ind{|x_0|>s},s\geq s_0\} \; ,  \\
  \mcg_1' & = \{\varphi_{s}:(x_0,\ldots,x_h)\to \ind{|x_0x_h|> s\|W_h\|_{\beta_h}},s\geq s_0\} \; .
\end{align*}
With this notation the process defined in \eqref{eq:tep-product} can be written as
$\tepextrindep_{h,n}(\varphi_s)$. Similarly, the tail empirical process of $X_j$'s can be written as
$\tepextrindep_{h,n}(\mci_{s})$:
\begin{align*}
\tepextrindep_{h,n}(\mci_{s})=\sqrt{n\tail{F}_0(u_n)}\left\{\frac{1}{n\tail{F}_0(u_n)}\sum_{j=1}^{n}\ind{|X_j|> u_n s }-s^{-\alpha}\right\}\;.
\end{align*}
We note that $\mcg_0'\subseteq \mcg_0$, where $\mcg_0$ was defined in the proof of
\Cref{theo:tep-extrindep} and $\mci_s = \phi_{s,\infty}$.
Assumptions~(\ref{eq:condition-epsilon-product-cev-better}) and~(\ref{eq:conditiondhs-product})
imply~(\ref{eq:negligibility-psi}) and~(\ref{eq:condition-sum-phi}) for the class $\mcg_1'$.  The
bias condition~(\ref{eq:second-order-psi}) is implied by
(\ref{eq:no-bias-orderstat-bivariate-product}).  Therefore, by \Cref{lem:clt-array-fidi}, the finite
dimensional distributions of $\tepextrindep_{h,n}$ converge to those of $\tepextrindep_{h}$ of
$\mcg_1'$.
Moreover, the envelope function of $\mcg$ is $\psi$. Furthermore,
\begin{itemize}
\item The class $\mcg$ is the union of two linearly ordered classes, hence is a VC subgraph class.
\item We consider the metric $\rho_h$ on $\mcg$ induced by the covariance, that is
  for~$\phi,\varphi\in \mcg$,
  \begin{align*}
    \rho_h(\phi,\varphi) = \sqrt{ \var(\tepextrindep_h(\phi)-\tepextrindep_h(\varphi)) } \;.
  \end{align*}
  The metric $\rho_h$ restricted to $\mcg_0'$ and $\mcg_1'$ respectively becomes
  \begin{align*}
    \rho_h(\mci_s,\mci_t) = \sqrt{ |s^{-\alpha} - t^{-\alpha}|}\;, \ \
    \rho_h(\varphi_s,\varphi_t) & = \sqrt{ |s^{-\beta_h} - t^{-\beta_h}|}  \; .
  \end{align*}
  Thus it easily seen that both $\mcg_0'$ and $\mcg_1'$ are totally bounded for the metric
  $\rho_h$. This means that Condition~\ref{item:totally-bounded} holds.
\item By regular variation, the convergence
  \begin{align*}
    \lim_{n\to\infty}\frac{1}{\tail{F}_0(\tepseq)}\esp[\ind{|X_0X_h|\in \tepseq b_h(\tepseq)(s,t]}]=\rho_h^2(\varphi_s,\varphi_t)
  \end{align*}
  is uniform \wrt\ $s,t\in[s_0,\infty)$ (see
  \cite[Theorem~1.5.2]{bingham:goldie:teugels:1989}). Therefore,~\ref{item:continuity} holds on
  $\mcg_1'$.
\end{itemize}
Thus we have proved that
\begin{align*}
  \tepextrindep_{h,n} \implies \tepextrindep_h \; , \ \mbox{ in } \ell^\infty(\mcg) \; .
\end{align*}
Recall that $\xi_n=\orderstat[|X|]{n}{n-k}/\tepseq$. Set $V_j=|X_jX_{j+h}|$ and
$v_n = \tepseq b(\tepseq)\|W_h\|_{\beta_h}$. Define
\begin{align*}
  \zeta_{n} = \orderstat[V]{n}{n-k} / v_n \; .
\end{align*}
As a consequence of the functional convergence and Vervaat's Lemma, we obtain
\begin{align}
  \label{eq:joint-quantile}
  \left(\tepextrindep_{h,n},  \sqrt{k} (\xi_n-1), \sqrt{k}(\zeta_n-1) \right)
  \implies (\tepextrindep_h,\gamma\tepextrindep_h(\mci_1),\gamma_h\tepextrindep_h(\varphi_1)) \; ;
\end{align}
By elementary algebra and Fubini's theorem, we have
\begin{align*}
  \sqrt{k} (\hat\gamma-\gamma)
  & = \int_1^\infty \tepextrindep_{h,n} (\mci_{s}) \frac{\rmd s}s
    + \int_{\xi_n}^1\tepextrindep_{h,n} (\mci_{s}) \frac{\rmd s}s
    + \gamma \sqrt{k}(\xi_n^{-1/\gamma} -1) \; , \\
  \sqrt{k} (\hat\gamma_h-\gamma_h)
  & = \int_1^\infty \tepextrindep_{h,n} (\varphi_{s}) \frac{\rmd s}s
    + \int_{\zeta_n}^1 \tepextrindep_{h,n} (\varphi_{s}) \frac{\rmd s}s
    + \gamma_h \sqrt{k}(\zeta_n^{-1/\gamma_h} - 1) \; .
\end{align*}
The middle term of both expansions is $o_P(1)$.  The last terms are dealt with by Slutsky's theorem
and~(\ref{eq:joint-quantile}):
\begin{align}
  \label{eq:corrections}
  \left(  \sqrt{k}(\xi_n^{-1/\gamma} - 1) , \sqrt{k}(\zeta_n^{-1/\gamma_h} - 1)\right) \implies
  \left( -\tepextrindep_h(\mci_1),-\tepextrindep_h(\varphi_1)\right) \; .
\end{align}
Furthermore,  jointly with the previous convergences,
\begin{align}
  \label{eq:integrals}
  \left(\int_1^\infty \tepextrindep_{h,n} (\mci_{s}) \frac{\rmd s}s, \int_1^\infty \tepextrindep_{h,n} (\varphi_{s}) \frac{\rmd s}s  \right)
  \implies   \left(\int_1^\infty \tepextrindep_{h} (\mci_{s}) \frac{\rmd s}s, \int_1^\infty \tepextrindep_{h} (\varphi_{s}) \frac{\rmd s}s\right) \; .
\end{align}
Indeed, for a fixed $A>1$, we can write
\begin{align*}
  \int_1^\infty \tepextrindep_{h,n} (\mci_{s}) \frac{\rmd s}s =
  \int_1^A \tepextrindep_{h,n} (\mci_{s}) \frac{\rmd s}s +   \int_A^\infty \tepextrindep_{h,n} (\mci_{s}) \frac{\rmd s}s \; , \\
  \int_1^\infty \tepextrindep_{h,n} (\varphi_{s}) \frac{\rmd s}s =
  \int_1^A \tepextrindep_{h,n} (\varphi_{s}) \frac{\rmd s}s +   \int_A^\infty \tepextrindep_{h,n} (\varphi_{s}) \frac{\rmd s}s \; .
\end{align*}
By \Cref{thm:fclt-array}, we have
\begin{align*}
  \left(  \int_1^A \tepextrindep_{h,n} (\mci_{s}) \frac{\rmd s}s ,  \int_1^A \tepextrindep_{h,n} (\varphi_{s}) \frac{\rmd s}s \right) \implies
  \left( \int_1^A \tepextrindep_h (\mci_{s}) \frac{\rmd s}s  ,   \int_1^A \tepextrindep_h (\varphi_{s}) \frac{\rmd s}s \right) \; .
\end{align*}
Moreover, as $A\to\infty$,
\begin{align*}
  \left(  \int_1^A \tepextrindep_{h} (\mci_{s}) \frac{\rmd s}s ,  \int_1^A \tepextrindep_{h} (\varphi_{s}) \frac{\rmd s}s \right) \convprob
  \left( \int_1^\infty \tepextrindep_{h} (\mci_{s}) \frac{\rmd s}s  ,   \int_1^\infty \tepextrindep_h (\varphi_{s}) \frac{\rmd s}s \right) \; .
\end{align*}
Therefore, we must prove that for all $\epsilon>0$,
\begin{align*}
  \lim_{A\to\infty} \limsup_{n\to\infty} \pr \left( \left|  \int_A^\infty \tepextrindep_{h,n} (\mci_{s}) \frac{\rmd s}s \; . \right| >\epsilon \right)
  = 0 \; , \\
  \lim_{A\to\infty} \limsup_{n\to\infty} \pr \left( \left|  \int_A^\infty \tepextrindep_{h,n} (\varphi_{s}) \frac{\rmd s}s \; . \right| >\epsilon \right)
  = 0 \; .
\end{align*}
The proof of these bounds rely on the $\beta$ mixing property and
Conditions~(\ref{eq:condition-slog-univ}) and~(\ref{eq:condition-slog-product}). We will only prove
the first one, the second being exactly similar. First note that
\begin{align*}
  \int_{A}^\infty \tepabs_n(\mci_s) \frac{\rmd s}s = \frac1k \sum_{j=1}^n \log_+(|X_j|/A\tepseq)
\end{align*}
In view of \Cref{hypo:basics}, it suffices to consider independent blocks and thus to compute the
variance of one block, that is we must prove that
\begin{align*}
  \lim_{A\to\infty} \limsup_{n\to\infty}  m_n\var\left( k^{-1/2}\sum_{i=1}^{r_n} \log_+(|X_j|/A\tepseq) \right)  = 0
\end{align*}
Let the variance term in the previous display be denoted by $\mcv_n(A)$. Then,
\begin{align*}
  \mcv_n(A) \leq \frac{\esp[\log_+^2(|X_0|/\tepseq)}{\tail{F}_0(\tepseq)}
  + 2 \sum_{j=1}^{r_n} \frac{\esp[\log_+(|X_0|/A\tepseq)\log_+(|X_j|/A\tepseq)]}{\tail{F}_0(\tepseq)} \; .
\end{align*}
By regular variation, extremal independence and condition~(\ref{eq:condition-slog-univ}), for every
$\epsilon>0$, we can choose an integer $m$ such that
\begin{align*}
  \limsup_{n\to\infty} \mcv_n(A)
  & = \int_A^\infty \log^2(x/A) \alpha x^{-\alpha-1} \rmd x \\
  &  \phantom{ = } + 2 \limsup_{n\to\infty}  \sum_{j=m+1}^{r_n} \frac{\esp[\log_+(|X_0|/\tepseq)\log_+(|X_j|/\tepseq)]}{\tail{F}_0(\tepseq)} \\
  & \leq A^{-\alpha} \int_1^\infty \log^2(y) \alpha y^{-\alpha-1} \rmd y + \epsilon \; .
\end{align*}
Thus $\lim_{A\to\infty} \limsup_{n\to\infty} \mcv_n(A) \leq \epsilon$. Since $\epsilon$ is
arbitrary, this proves the requested bound.

 Combining~(\ref{eq:corrections}) and~(\ref{eq:integrals}), we have proved that
\begin{align}
  \label{eq:joint-convergence-hill}
  \sqrt{k}\left( \hat\gamma-\gamma,\hat\gamma_h-\gamma_h\right) \convdistr \left( \int_1^\infty
    \tepextrindep_h(\mci_s)\frac{\rmd s}s - \tepextrindep_h(\mci_1), \int_1^\infty
    \tepextrindep_h(\psi_s)\frac{\rmd s}s - \tepextrindep_h(\psi_1) \right) \; .
\end{align}
Define the Gaussian process $\mbg$ on $(0,\infty)\times\Rset$ by
$\mbg(s,y) = \tepextrindep_h(\phi_{s,y})$. Then,
\begin{align*}
  \int_1^\infty  \tepextrindep_h(\mci_s)\frac{\rmd s}s - \tepextrindep_h(\mci_1)
  & = \int_0^\infty \int_{-\infty}^\infty \{\log_{+}(u)-\gamma\ind{u>1}\}    \mbg(\rmd u\rmd v)  \; ,  \\
  \int_1^\infty  \tepextrindep_h(\varphi_s)\frac{\rmd s}s - \mbg(\varphi_1)
  &  =    \int_0^\infty \int_{-\infty}^\infty \{\log_{+}(u|v|) - \gamma_h \ind{u|v|>1} \}  \mbg(\rmd u\rmd v) \; .
\end{align*}
Write
\begin{align*}
  \sqrt{k}(\hat\scalingexp_h-\scalingexp)
  = \sqrt{k}\left\{\widehat\gamma_h/\widehat\gamma-\gamma_h/\gamma\right\}=\sqrt{k}\frac{1}{\widehat\gamma}\left\{\widehat\gamma_h-\gamma_h\right\}
  -  \sqrt{k}\frac{\gamma_h}{\hat\gamma\gamma}\left\{\widehat\gamma^{-1}-\gamma^{-1}\right\} \; .
\end{align*}
Applying the joint convergence~(\ref{eq:joint-convergence-hill}), Slutsky lemma and the delta
method, we obtain,
\begin{multline*}
  \sqrt{k}(\hat\kappa_h-\kappa_h)   \convdistr \\
  (1+\kappa_h) \int_0^\infty\!\!\int_{-\infty}^\infty \left(\beta_h\log_{+}(u|v|)-\ind{u|v|>1}
    -\alpha\log_{+}(u)+\ind{u>1}\right) \mbg(\rmd u,\rmd v) \; .
\end{multline*}
There remains to compute the variance $\sigma^2$ of the limiting distribution which is Gaussian with
zero mean. Setting
\begin{align*}
  h(u,v) = \beta_h\log_{+}(u|v|)-\ind{u|v|>1} - \alpha\log_{+}(u) + \ind{u>1} \; ,
\end{align*}
and combining with~\eqref{eq:representation-nucond} we obtain
\begin{align*}
  \sigma^2 & = (1+\kappa_h)^2\esp\left[\int_0^\infty h^2(s,s^{\kappa_h }|W_h|)\alpha s^{-\alpha-1}\rmd s\right]\;.
\end{align*}
Since  $h^2(s,s^{\kappa_h} |W_h|)=0$ if $s<1$ and $s^{1+\kappa_h}|W_h|<1$, we have
\begin{align*}
  \sigma^2&=(1+\kappa_h)^2\esp\left[\int_{1\wedge W_h^{-1/(1+\kappa_h)}}^\infty h^2(s,s^{\kappa_h}|W_h|)\alpha s^{-\alpha-1}\rmd s\right]\;.
\end{align*}
Setting $A=|W_h|^{\beta_h}$ and substituting $t=s^{-\alpha}$ we obtain
\begin{align*}
  &\int_{1\wedge W_h^{-1/(1+\kappa_h)}}^\infty h^2(s,s^{\kappa_h}|W_h|)\alpha s^{-\alpha-1}\rmd s=
    \int_0^{1\vee A}h^2(t^{-1/\alpha},t^{-\kappa_h/\alpha}|W_h|)\rmd t\;.
\end{align*}
We also note that
\begin{align*}
  h(t^{-1/\alpha},t^{-\kappa_h/\alpha}|W_h|)=\left\{\log_+(t^{-1}A)-\ind{A>t}\right\}-\left\{\log_+(t^{-1})-\ind{t<1}\right\}\;.
\end{align*}
If $A<1$, then we evaluate
\begin{align*}
  \int_0^{1}h^2(t^{-1/\alpha},t^{-\kappa_h/\alpha}|W_h|)\rmd t=A\log^2(A)+\int_A^1\left\{\log (t)+1\right\}^2\rmd t\;.
\end{align*}
Since the primitive of $\left\{\log (t)+1\right\}^2$ is $t\log^2(t)+t$, the right hand side becomes
$1-A$.  If $A>1$, then
\begin{align*}
  \int_0^{1}h^2(t^{-1/\alpha},t^{-\kappa_h/\alpha}|W_h|)\rmd t=\log^2(A)+\int_1^A\left\{\log(A)-\log (t)-1\right\}^2\rmd t=A-1\;.
\end{align*}
Summarizing,
\begin{align*}
  \sigma^2=(1+\kappa_h)^2\esp[|1-|W_h|^{\beta_h}|]\;.
\end{align*}

\paragraph{Acknowledgements} The research of Clemonell Bilayi and Rafal Kulik was supported by the
NSERC grant 210532-170699-2001.  The research of Philippe Soulier was partially supported by the
LABEX MME-DII.

\bigskip
{\small
  \noindent Clemonell Bilayi-Biakana: {\tt cbilayib@uottawa.ca}\\
  University of Ottawa,\\
  Department of Mathematics and Statistics,\\
  585 King Edward Av.,\\
  K1J 8J2 Ottawa, ON, Canada }

\bigskip
{\small
  \noindent Rafa{\l}~Kulik: {\tt rkulik@uottawa.ca}\\
  University of Ottawa,\\
  Department of Mathematics and Statistics,\\
  585 King Edward Av.,\\
  K1J 8J2 Ottawa, ON, Canada }

\bigskip
{\small
  \noindent Philippe Soulier: {\tt philippe.soulier@u-paris10.fr}\\
  Universit\'e  Paris Nanterre,\\
  D\'epartement de math\'ematiques et informatique,\\
  Laboratoire MODAL'X EA3454,\\
  92000 Nanterre, France }


\begin{thebibliography}{vdVW96}

\bibitem[BGT89]{bingham:goldie:teugels:1989}
Nicholas~H. Bingham, Charles~M. Goldie, and Jozef~L. Teugels.
\newblock {\em Regular variation}.
\newblock Cambridge University Press, Cambridge, 1989.

\bibitem[Bra05]{bradley:2005}
Richard~C. Bradley.
\newblock Basic properties of strong mixing conditions. {A} survey and some
  open questions.
\newblock {\em Probability Surveys}, 2:107--144, 2005.

\bibitem[DJe17]{drees:janssen:2017}
Holger Drees and Anja Jan\ss~en.
\newblock Conditional extreme value models: fallacies and pitfalls.
\newblock {\em Extremes}, 20(4):777--805, 2017.

\bibitem[DM01]{davis:mikosch:2001}
Richard~A. Davis and Thomas Mikosch.
\newblock Point process convergence of stochastic volatility processes with
  application to sample autocorrelation.
\newblock {\em Journal of Applied Probability}, 38A:93--104, 2001.
\newblock Probability, statistics and seismology.

\bibitem[DR10]{drees:rootzen:2010}
Holger Drees and Holger Rootz{\'e}n.
\newblock Limit theorems for empirical processes of cluster functionals.
\newblock {\em Ann. Statist.}, 38(4):2145--2186, 2010.

\bibitem[DR11]{das:resnick:2011}
Bikramjit Das and Sidney~I. Resnick.
\newblock Conditioning on an extreme component: model consistency with regular
  variation on cones.
\newblock {\em Bernoulli}, 17(1):226--252, 2011.

\bibitem[Dre00]{drees:2000}
Holger Drees.
\newblock Weighted approximations of tail processes for {$\beta$}-mixing random
  variables.
\newblock {\em The Annals of Applied Probability}, 10(4):1274--1301, 2000.

\bibitem[Dre02]{drees:2002tail}
Holger Drees.
\newblock Tail empirical processes under mixing conditions.
\newblock In {\em Empirical process techniques for dependent data}, pages
  325--342. Birkh\"auser Boston, Boston, MA, 2002.

\bibitem[DSW15]{drees:segers:warchol:2015}
Holger Drees, Johan Segers, and Micha{\l} Warcho{\l}.
\newblock Statistics for tail processes of {M}arkov chains.
\newblock {\em Extremes}, 18(3):369--402, 2015.

\bibitem[HL06]{hult:lindskog:2006}
Henrik Hult and Filip Lindskog.
\newblock Regular variation for measures on metric spaces.
\newblock {\em Publ. Inst. Math. (Beograd) (N.S.)}, 80(94):121--140, 2006.

\bibitem[HR07]{heffernan:resnick:2007}
Janet~E. Heffernan and Sidney~I. Resnick.
\newblock Limit laws for random vectors with an extreme component.
\newblock {\em The Annals of Applied Probability}, 17(2):537--571, 2007.

\bibitem[JD16]{janssen:drees:2016}
Anja Janssen and Holger Drees.
\newblock A stochastic volatility model with flexible extremal dependence
  structure.
\newblock {\em Bernoulli}, 22(3):1448--1490, 2016.

\bibitem[Kal17]{kallenberg:2017}
Olav Kallenberg.
\newblock {\em Random Measures, Theory and Applications}, volume~77 of {\em
  Probability Theory and Stochastic Modelling}.
\newblock Springer-Verlag, New York, 2017.

\bibitem[KS11]{kulik:soulier:2011}
Rafa{\l} Kulik and Philippe Soulier.
\newblock The tail empirical process for long memory stochastic volatility
  sequences.
\newblock {\em Stochastic Processes and their Applications}, 121(1):109 -- 134,
  2011.

\bibitem[KS15]{kulik:soulier:2015}
Rafa{\l} Kulik and Philippe Soulier.
\newblock Heavy tailed time series with extremal independence.
\newblock {\em Extremes}, 18:273--299, 2015.

\bibitem[KSW15]{kulik:soulier:wintenberger:2015}
Rafa{\l} Kulik, Philippe Soulier, and Olivier Wintenberger.
\newblock Practical conditions for weak convergence of tail empirical
  processes.
\newblock arxiv:1511.04903, 2015.

\bibitem[LRR14]{lindskog:resnick:roy:2014}
Filip Lindskog, Sidney~I. Resnick, and Joyjit Roy.
\newblock Regularly varying measures on metric spaces: hidden regular variation
  and hidden jumps.
\newblock {\em Probability Surveys}, 11:270--314, 2014.

\bibitem[MR13]{mikosch:rezapour:2013}
Thomas Mikosch and Mohsen Rezapour.
\newblock Stochastic volatility models with possible extremal clustering.
\newblock {\em Bernoulli}, 19(5A):1688--1713, 2013.

\bibitem[Roo09]{rootzen:2009}
Holger Rootz\'{e}n.
\newblock Weak convergence of the tail empirical process for dependent
  sequences.
\newblock {\em Stoch. Proc. Appl.}, 119(2):468--490, 2009.

\bibitem[vdVW96]{vandervaart:wellner:1996}
Aad~W. van~der Vaart and Jon~A. Wellner.
\newblock {\em Weak convergence and empirical processes}.
\newblock Springer, New York, 1996.

\bibitem[Ver72]{vervaat:1972}
Wim Vervaat.
\newblock Functional central limit theorems for processes with positive drift
  and their inverses.
\newblock {\em Z. Wahrscheinlichkeitstheorie und Verw. Gebiete}, 23:245--253,
  1972.

\end{thebibliography}
\end{document}